\newtheorem{thm}{Theorem}[section]
\newtheorem{cor}[thm]{Corollary}
\newtheorem{lem}[thm]{Lemma}
\newtheorem{prop}[thm]{Proposition}
\theoremstyle{definition}
\newtheorem{defn}[thm]{Definition}
\theoremstyle{remark}
\theoremstyle{definition}
\newcommand{\PP}{\mathbb{P}}
\def\move-in{\parshape=1.75true in 4true in}
\def\twomed{\medskip\medskip}
\begin{document}

\title{ Secant Varieties of ${\PP ^1}\times \cdots \times {\PP ^1}$ ($n$-times) are NOT Defective for $n \geq 5$.}

\author{M. V. Catalisano,  A.Geramita, A.Gimigliano}

\address{Maria Virginia Catalisano, DIPTEM - Dipartimento di Ingegneria della Produzione, Termoenergetica e Modelli
Matematici, Piazzale Kennedy, pad. D 16129 Genoa, Italy.}

\email{catalisano@diptem.unige.it}

\address{Anthony V. Geramita,
Department of Mathematics and Statistics, Queen's University,
Kingston, Ontario, Canada,  and Dipartimento di Matematica,
Universit\`a di Genova,Genoa, Italy.}

\email{anthony.geramita@gmail.com}

\address{Alessandro Gimigliano,
Dipartimento di Matematica and CIRAM, Universit\`a di Bologna, 40126
Bologna, Italy.}

\email{gimiglia@dm.unibo.it}

\subjclass{Primary ; Secondary }

\date{}

%%% ----------------------------------------------------------------------

\begin{abstract}

Let $V_n$ be the Segre embedding of\ ${\PP ^1}\times \cdots \times
{\PP ^1}$\ ($n$ times). We prove that the higher secant varieties
$\sigma_s(V_n)$ always have the expected dimension, except for
$\sigma_3(V_4)$, which is of dimension 1 less than expected.

\end{abstract}

%%% ----------------------------------------------------------------------
\maketitle 
%%% ----------------------------------------------------------------------

\section{Introduction}\label{intro}

The problem of determining the dimensions of the higher secant
varieties of embedded projective varieties is a problem which has attracted geometers for over a century.  Original investigations mostly concentrated on the secant line variety and were concerned, largely, with questions of projection.  Varieties which had secant varieties of less than the expected dimension (the so called {\it defective} secant varieties) were especially interesting and were the object of intense study (e.g. all the 2-uple Veronese embeddings of $\PP^n$ into $\PP^{ {n+2\choose 2} -1}$ and all the Segre embeddings of ${\PP^n}\times {\PP^m}$ into $\PP^N$ with $N =(n+1)(m+1)-1)$ (see e.g.\cite{ChCi}, \cite{ChCo}, \cite{K}, \cite{Pa}, \cite{Z} and the bibliographies of these for a sampling of the kinds of results obtained classically).

In more recent times, given the questions raised by computer scientists in complexity theory \cite{BCS} and by biologists and statisticians (see \cite{GHKM} \cite{GSS} \cite{AllRh08}) the focus has shifted to the study of the Segre embeddings of $\PP^{n_1}\times \cdots\times \PP^{n_t}$ for $t \geq 3$.  As is to be expected for a problem with so much interest in such varied disciplines, the approaches to the problem have been varied (see e.g. \cite{BCS} \cite{Land08} for the computational complexity approach, \cite{AllRh07} \cite{GSS}  for the biological statistical approach, \cite{AOP06} \cite {CGG2} \cite{CGG2err}, \cite{CGG5} \cite{CGG6} \cite{ChCi} for the classical algebraic geometry approach, \cite{LaMa08} \cite{LaWe07}  for the representation theory approach, \cite{Draisma} for a tropical approach and \cite{Fr} for a multilinear algebra approach).

Despite all the progress made on the most fundamental question about these secant varieties, namely: how big are they?; many questions remain open.  It is even unknown which secant varieties are defective for the families $\PP^t \times \cdots\times \PP^t$, $n$-times ($t$ fixed, $n \geq 3$) for every $t \geq 1$.  The lovely paper \cite{AOP06} has hazarded a conjecture about this problem.

In this paper we solve this last problem for $t = 1$ (giving solid evidence for the conjecture of Abo-Ottaviani-Peterson).  We show that the only defective secant variety in this infinite family is that of the the secant $\PP^2$'s to $\PP^1\times \PP^1\times \PP^1\times \PP^1 \subset \PP^{15}$ which, instead of forming a hypersurface in $\PP^{15}$, has codimension 2 in $\PP^{15}$ (see Theorem \ref{mainTHM}).

Our approach to this problem has several components, some of which come out of the pioneering work on this sort of question initiated by J. Alexander and A. Hirschowitz \cite{AH95} \cite{AH00} for the Veronese varieties (see also the work of Hartshorne and Hirschowitz \cite{HH85}).  The first, and best known, component of our approach is to reduce the problem (via an application of Terracini's Lemma and polarity) to that of calculating the Hilbert function, in a specific degree, of a non-reduced collection of linear subspaces of some projective space, in fact ``fat" subschemes supported on linear spaces.

The second step, also well-understood (and useful only when one wants to show that the dimension is that ``expected") is to specialize the supports of these fat schemes in a way that permits (usually via Castelnuovo's Lemma) an induction to be carried out.  This is the Alexander-Hirschowitz ``m\'{e}thode d'Horace" strategy, i.e. ``divide and conquer".

It is the specialization that requires some artistry and this ``dividing" that usually gives the most problems.  It is precisely in this ``dividing" part of the procedure that we introduce some new ideas: the key one being to replace the specialized scheme by a new scheme, obtained from the specialized scheme by adding to it some new linear spaces.

These new linear spaces actually show up in the specialized scheme as fixed components (but only for a while, i.e. only for the forms of the particular degree that interest us) which disappear when we consider the ideal of the specialized scheme in higher degree.

This new ``enlarged scheme" (which we are interested in only in one degree) has the effect of ``dividing up a single fat point" into two parts (at least in one degree): one part recognizes the directions that are constrained as a result of our scheme having the new linear components and the other part is what is left.  
This is a ``divide and conquer" strategy on the level of first derivatives and hence also merits the name of a {\it differential Horace method}.  

Thus, on a very fundamental level, what has just been described is our strategy.  Naturally, as was evident in the original papers of Alexander and Hirschowitz on the dimensions of the higher secant varieties to the Veronese varieties, the conversion of the strategy into a proof requires many verifications because of the arithmetic involved.  Indeed, these verifications constitute the major part of what is written here.  We felt it important to lay out, as simply as possible, the nature of the general strategy since the reader can easily get lost in the verifications, which are numerous and tedious.

Now for some quite general considerations.  Let $V \subset \PP^N$ be an irreducible non-degenerate projective variety of dimension $n$.  By a crude parameter count, one expects the dimension of the variety which is the (closure of the) union of all the secant $\PP^{s-1}$'s to $V$ (denoted $\sigma_s(V)$) to be
$$
\min\{ sn+s-1, N \} = \min\{ s(n+1)-1, N \}.
$$
Thus, if one believes that the dimension of $\sigma_s(V)$ is precisely this expected value -- for every $s$ -- then it is enough to verify this

\move-in\noindent $i)$\ only for the integer ${N+1}\over{n+1}$ (assuming that ${N+1}\over{n+1}$ is an integer);

\move-in\noindent $ii)$\ or (when ${N+1}\over{n+1}$ is not an integer) for the two distinct integers
$$
\left\lfloor{ {N+1}\over {n+1} }\right\rfloor \ \ \hbox{ and } \left\lfloor{ {N+1}\over {n+1}} \right\rfloor + 1 = \left\lceil{ {N+1}\over {n+1} }\right\rceil .
$$

In this paper we concentrate on the case
$$
V = V_n:= \PP^1 \times \cdots \times \PP^1 (n-\hbox{times} )
$$
which, via the Segre embedding, is in $\PP^N$ for $N = 2^n - 1$. 

This case has been considered by several authors:

\move-in\noindent $a)$\ In \cite{CGG2} \cite{CGG2err} we showed that when ${N+1}\over {n+1}$ is an integer, i.e. ${2^n}\over {n+1}$ is an integer (so $n = 2^t-1$ for some $t$), then for $s = {2^n\over 2^t} = 2^{{2^t}-(t+1)}$ we have (for every $t \geq 2$) $\sigma_s(V_n) = \PP^N$.  Thus, all the higher secant varieties of $V_n$ ($n = 2^t - 1$, $t \geq 2$) have the expected dimension.  This covers $i)$ above.

\move-in\noindent $b)$\ The cases $n \leq 7$ were covered by ad-hoc methods in \cite{CGG4} as well as in \cite{Draisma} and \cite{AOP06}.

\move-in\noindent $c)$\ In \cite{CGG4} we were able to settle the dimension question (for $\sigma_s(V_n)$ for every $n$ and exactly ONE of 
$$
s = \left \lfloor{ 2^n\over {n+1}}\right\rfloor \ \ \ \ \hbox{or } \ \ \ \  s = \left \lfloor{ 2^n\over {n+1}}\right\rfloor + 1 = \left\lceil{ 2^n\over {n+1}}\right\rceil 
$$
(the one which is even).

The main theorem of this paper is Theorem \ref{mainTHM} which states:

\medskip\noindent
Let $V_n \subset \PP^N$ be the Segre embedding of $\PP^1\times\cdots\times\PP^1$ ($n$-times).  The higher secant varieties, $\sigma_s(V_n)$ have the expected dimension i.e.
$$
\dim  \sigma_s(V_n) = \min \{ s(n+1)-1, N \}
$$
for every $s$ and $n$ EXCEPT for $\sigma_3(V_4)$, which has dimension 13 instead of 14.

\twomed Our main theorem has the following interesting corollary (see e.g. \cite{CGG2} and \cite{BCS} for undefined terms and the correspondence between decomposable tensors and Segre Varieties).

\medskip\noindent
 A generic ${\Bbb C}$-tensor of format
$2\times ...\times 2$ ($n$ times) can be written as the sum of $s$
decomposable tensors for $s= \lceil{2^n\over n+1}\rceil$, and no fewer.

\section{Preliminaries, Notation}\label{prelims}

\bigskip
We will always work over an algebraically closed field $\kappa$, with
char $\kappa =0$.

Let us recall the notion of higher secant varieties.

\begin {defn}\label{defhighersec} Let $V\subseteq \PP ^N$ be a closed
irreducible non-degenerate projective variety of dimension $n$. The $s^{th}$ {\it
higher secant variety} of $X$, denoted $\sigma_s(V)$, is the closure
of the union of all linear spaces spanned by $s$ independent points
of $V$.
\end {defn}

 Recall that there is an inequality involving the dimension of $\sigma_s(V)$.  Namely,
$$
\dim \sigma_s(V)\leq {\rm min} \{N, sn+s-1\} $$
and one ``expects" the inequality should, in general, be an
equality.

When $\sigma_s(V)$ does not have the expected dimension, $V$ is said
to be $(s-1)$-{\it defective}, and the positive integer
$$
\delta _{s-1}(V) = {\rm min} \{N, sn+s-1\}-\dim \sigma_s(V)
$$
is called the  $(s-1)${\it-defect} of $V$.

\bigskip A classical result about higher secant varieties is Terracini's Lemma (see \cite{Te}, \cite{CGG2}):

\begin{prop} \label{Terracini} {\bf Terracini's Lemma:} Let $(V,{\mathcal L})$ be
a polarized, integral, non-singular scheme; if ${\mathcal L}$ embeds
V into $\PP ^N$, then:
$$
T_P(\sigma_s(V)) = \langle T_{P_1}(V),...,T_{P_s}(V)\rangle ,
$$
where $P_1,...,P_s$ are s generic points on V, and P is a generic
point of $\langle P_1,...,P_s\rangle $  (the linear span of $P_1,
\ldots , P_s$); here  $T_{P_i}(X)$ is the projectivized tangent
space of V in $\PP ^N$. {\hfill \qed}
\end{prop}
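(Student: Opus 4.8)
The plan is to realize $\sigma_s(V)$ as the closure of the image of an explicit rational parametrization and then identify the image of its differential at a generic point with $\langle T_{P_1}(V),\ldots,T_{P_s}(V)\rangle$. Choose affine charts: near each generic point $P_i$ pick local coordinates $u_i=(u_i^{(1)},\ldots,u_i^{(n)})$ on $V$ together with a local lift $v_i(u_i)\in\kappa^{N+1}$ of the corresponding point of $\PP^N$, so that $v_i(0)$ represents $P_i$. A general point of $\langle P_1,\ldots,P_s\rangle$ is the class of $\sum_i\lambda_i v_i(0)$ for suitable scalars $\lambda_i$, which suggests the map
$$
F(u_1,\ldots,u_s,\lambda_1,\ldots,\lambda_s)=\sum_{i=1}^s \lambda_i\, v_i(u_i)\in\kappa^{N+1},
$$
whose projectivization dominates $\sigma_s(V)$. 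Since scaling all the $\lambda_i$ rescales $F$, the projectivized image has dimension at most $sn+s-1$, the expected value.

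Next I would compute the differential of $F$ at a generic parameter and read off its image. The partial derivatives split into two families: $\partial F/\partial\lambda_i=v_i(u_i)$, a representative of $P_i$ itself, and $\partial F/\partial u_i^{(j)}=\lambda_i\,\partial v_i/\partial u_i^{(j)}$, which for $\lambda_i\neq 0$ span the tangent directions to $V$ at $P_i$. Thus $\mathrm{Im}(dF)$ is the linear span in $\kappa^{N+1}$ of the affine tangent cones over the $T_{P_i}(V)$, so after projectivizing it is exactly $\langle T_{P_1}(V),\ldots,T_{P_s}(V)\rangle$. Because $F$ maps into the affine cone over $\sigma_s(V)$, this image is automatically contained in the tangent space to $\sigma_s(V)$ at $P$, yielding one inclusion $\langle T_{P_1}(V),\ldots,T_{P_s}(V)\rangle\subseteq T_P(\sigma_s(V))$.

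For the reverse inclusion I would invoke generic smoothness, available since $\mathrm{char}\,\kappa=0$. A dominant morphism of varieties is smooth over a dense open subset of the target, so for $P$ generic the differential $dF$ is surjective onto $T_P(\sigma_s(V))$; equivalently $\dim\mathrm{Im}(dF)=\dim\sigma_s(V)=\dim T_P(\sigma_s(V))$, the last equality holding because $\sigma_s(V)$ is nonsingular at its generic point. Combined with the explicit computation above, this forces $T_P(\sigma_s(V))=\langle T_{P_1}(V),\ldots,T_{P_s}(V)\rangle$. The main delicate point is the genericity bookkeeping: one must ensure that a generic choice of $P_1,\ldots,P_s$ on $V$ together with a generic $P$ in their span corresponds to a smooth point of the parametrization, so that generic smoothness applies. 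This is precisely where characteristic zero enters, since in positive characteristic the parametrization could be everywhere inseparable and the differential could fail to be surjective onto the tangent space.
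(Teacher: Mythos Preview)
Your argument is the standard proof of Terracini's Lemma and is correct: parametrize the open part of $\sigma_s(V)$ via $F(u,\lambda)=\sum_i\lambda_i v_i(u_i)$, read off $\mathrm{Im}(dF)$ as the affine span of the $\widehat{T_{P_i}(V)}$, and use generic smoothness in characteristic zero to conclude that this image coincides with the affine tangent space to $\sigma_s(V)$ at a generic $P$. The genericity remark at the end is the right caveat, and the appeal to $\mathrm{char}\,\kappa=0$ is essential.

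Note, however, that the paper does not actually prove this proposition: it is quoted as a classical result, attributed to Terracini and to \cite{Te}, \cite{CGG2}, and closed with a {\qed}. So there is no ``paper's own proof'' to compare against; you have supplied what the authors chose to omit, and what you wrote is exactly the argument one finds in the references.
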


\bigskip
\begin{defn}\label{fatpoint} Let $V$ be a scheme, and $P\in V$ a closed point; we
define a $m$-fat point, $mP$, to be the subscheme of $V$ defined by
the ideal sheaf ${\mathcal I}^m_{P}\subset {\mathcal O}_V$.
\end{defn}

\medskip

We will often use the notation $m_1P_1+m_2P_2+...+m_sP_s$ to denote
the schematic union of $m_1P_1$,...,$m_sP_s$, i.e. a scheme defined
by the ideal sheaf ${\mathcal I}^{m_1}_{P_1}\cap ...\cap {\mathcal
I}^{m_s}_{P_s}$.

Let $Z\subset V$ be a scheme made of $s$ generic 2-fat points, i.e.
a scheme defined by the ideal sheaf ${\mathcal I}_{Z} = {\mathcal
I}^2_{P_1}\cap ...\cap {\mathcal I}^2_{P_s}\subset {\mathcal O}_V$,
where $P_1,...,P_s$ are $s$ generic points. Since there is a
bijection between hyperplanes of the space $\PP ^N$ containing the
subspace $\langle T_{P_1}(V),...,T_{P_s}(V) \rangle $ and the
elements of $H^0(V,{\mathcal I}_{Z}({\mathcal L}))$, we have:

\begin {cor}\label{secandfat} Let V, ${\mathcal L}$, $Z$, be as
in Proposition \ref{Terracini}; then
$$
\dim \sigma_s(V) = \dim \langle T_{P_1}(V),...,T_{P_s}(V)\rangle = N
- \dim H^0(V,{\mathcal I}_{Z}({\mathcal L})) .
$$
\end{cor}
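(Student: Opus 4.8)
The plan is to prove the two equalities separately: the first is a repackaging of Terracini's Lemma, and the second is a linear-system dimension count for hyperplanes through a fixed linear subspace.

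\emph{First equality.} I would consider the incidence variety $\Gamma$ parametrizing tuples $(P_1,\dots,P_s;P)$ with $P_i\in V$ and $P\in\langle P_1,\dots,P_s\rangle$; it is irreducible, being birational to a $\PP^{s-1}$-bundle over an open subset of $V^s$. By Definition \ref{defhighersec} the projection $\Gamma\to\PP^N$ has image whose closure is $\sigma_s(V)$, so this projection dominates $\sigma_s(V)$, and hence a generic point $P$ of $\sigma_s(V)$ is the image of a generic point of $\Gamma$ — that is, $P$ is a generic point of $\langle P_1,\dots,P_s\rangle$ for generic $P_1,\dots,P_s$ on $V$. These are exactly the hypotheses of Proposition \ref{Terracini}, which yields $T_P(\sigma_s(V))=\langle T_{P_1}(V),\dots,T_{P_s}(V)\rangle$. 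Since $\mathrm{char}\,\kappa=0$, a generic point of the integral variety $\sigma_s(V)$ is smooth, so $\dim\sigma_s(V)=\dim T_P(\sigma_s(V))$, and the first equality follows.

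\emph{Second equality.} Put $\Lambda:=\langle T_{P_1}(V),\dots,T_{P_s}(V)\rangle\subseteq\PP^N$ and $Z:=2P_1+\cdots+2P_s$. A linear subspace of projective dimension $d$ in $\PP^N$ is the common zero set of an $(N-d)$-dimensional space of linear forms, so the hyperplanes containing $\Lambda$ form the projectivization of a vector space $W$ with $\dim W=N-\dim\Lambda$. Because $\mathcal{L}$ embeds $V$ in $\PP^N$ by the complete linear system, pullback of linear forms identifies $H^0(\PP^N,\mathcal{O}(1))$ with $H^0(V,\mathcal{L})$ (so $h^0(V,\mathcal{L})=N+1$). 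Under this identification I claim $W$ corresponds to $H^0(V,\mathcal{I}_Z(\mathcal{L}))$: a hyperplane $h$ contains $T_{P_i}(V)$ for every $i$ precisely when its defining section $f_h$ vanishes to order $\geq 2$ at each $P_i$, i.e. $f_h\in\bigcap_i H^0(\mathcal{I}^2_{P_i}(\mathcal{L}))=H^0(\mathcal{I}_Z(\mathcal{L}))$ — this is just the fact that $T_{P_i}(V)$ is cut out by the linear forms whose restriction to $V$ has vanishing differential at $P_i$, and that double vanishing of a linear section at $P_i$ means exactly membership (locally) in the square of the maximal ideal of $P_i$. Comparing dimensions gives $\dim H^0(V,\mathcal{I}_Z(\mathcal{L}))=\dim W=N-\dim\Lambda$, which is the second equality once rearranged.

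I do not expect any real obstacle: the only point needing care is the equivalence ``$h\supseteq T_{P_i}(V)$ iff $f_h$ vanishes doubly at $P_i$'', which must be phrased carefully because of what ``projectivized tangent space of $V$ in $\PP^N$'' means, but it is the standard description of the embedded tangent space. This corollary is exactly the reduction that turns the secant-dimension problem into the postulation problem for a union of generic double points; all the serious work of the paper — the specialization and the differential Horace arguments — takes place afterwards, in computing that postulation.
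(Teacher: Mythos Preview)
Your argument is correct and follows exactly the route the paper takes: the first equality is Terracini's Lemma applied at a generic (hence smooth) point of $\sigma_s(V)$, and the second is the standard bijection between hyperplanes of $\PP^N$ containing $\langle T_{P_1}(V),\dots,T_{P_s}(V)\rangle$ and sections in $H^0(V,\mathcal I_Z(\mathcal L))$, which the paper records in the sentence immediately preceding the corollary. The paper gives no further proof beyond that one sentence, so your write-up is in fact more detailed than the original.
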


\medskip\noindent  Now, let $V=\PP ^1\times ... \times \PP ^1$ ($n$-times) and let $V_n \subset \PP ^N$ ($N = 2^t -1$)
be the embedding of $V$ given by ${\mathcal L}={\mathcal
O}_V(1,...,1)$.  By applying the corollary above to our case we get
$$
\dim \sigma_s(V_n) =N-\dim(I_Z)_{(1,1, \ldots , 1)}, 
$$
where $Z\subset  \PP ^1\times ...\times \PP ^1$ is a set of $s$
generic 2-fat points, and $(I_Z) \subset R$ is the multihomogeneous ideal of $Z$ in 
$R = \kappa [x_{0,1},x_{1,1}, \ldots , x_{0,n},x_{1,n}]$,
the ${\Bbb Z}^n$- graded coordinate ring of $\PP ^1\times ...\times \PP ^1$.
\par
Now consider the birational map
$$
g: \PP ^1\times ...\times \PP ^1 ---\rightarrow {\Bbb A}^n,
$$
where:
$$ ((x_{0,1}, x_{1,1}),...,(x_{0,n}, x_{1,n})) \longmapsto
({x_{1,1}\over x_{0,1}}, {x_{1,2}\over x_{0,2}}, \cdots ,
{x_{1,n}\over x_{0,n}}) \ .
$$
This map is defined on the open subset of $\PP ^1\times ...\times
\PP ^1$  given by $\{x_{0,1}x_{0,2}...x_{0,n}\neq 0\}$.

Let $S=\kappa[z_0,\ z_{1,1}, \ z_{1,2}, \ ...\ , z_{1,n} ]$ be the
coordinate ring of $\PP ^n$ and consider the embedding ${\Bbb A}^n
\rightarrow \PP ^n$ whose image is the chart ${\Bbb
A}^n_0=\{z_0=1\}$. By composing the two maps above we get:
$$
f: \PP ^1\times ...\times \PP ^1 ---\rightarrow {\Bbb P}^n,
$$
with
$$
((x_{0,1}, x_{1,1}),...,(x_{0,n}, x_{1,n})) \longmapsto
(1,{x_{1,1}\over x_{0,1}}, {x_{1,2}\over x_{0,2}}, \cdots
,{x_{1,n}\over x_{0,n}} )
$$
$$
= (x_{0,1}x_{0,2} \cdots x_{0,n},\ x_{1,1}x_{0,2} \cdots x_{0,n}, \
\ldots ,\ x_{0,1} \cdots x_{0,n-1}x_{1,n}).
$$

Let $Z\subset \PP ^1\times ...\times \PP ^1$ be a zero-dimensional
scheme which is contained in the affine chart
$\{x_{0,1}x_{0,2}...x_{0,n}\neq 0\}$ and let $Z' = f(Z)$. We want to
construct a scheme $X\subset  \PP ^n$ such that $\dim (I_X)_n=\dim
(I_Z)_{  (1,...,1)}$.

Let $Q_{ 1}, Q_{ 2}, \ldots , Q_{ n}$ be the coordinate points
of $\PP ^n$.    The defining ideal of $Q_i$, ($1 \leq i \leq n$),
is:
$$
I_{Q_i} = (z_0, \ z_{1,1},  \ldots , \widehat {z_{1,i}},   \ldots ,
z_{1,n}) \ .
$$
The following theorem (see \cite{CGG4}) gives the relation
between the homogeneous ideal $I_X$ and the multihomogeneous ideal
$I_Z$:

\medskip

\begin {thm}\label{affineproj}  Let $Z,\ Z'$ be as above and let
$X = Z' + (n-1)Q_1 + ... + (n-1)Q_n\subset \PP ^n$.  Then we have:
$$
\dim (I_X)_n=\dim (I_Z)_{(1,...,1)} \ .
$$
\end {thm}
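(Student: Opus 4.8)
The plan is to pass from the multigraded setup on $\PP^1 \times \cdots \times \PP^1$ to the standard-graded setup on $\PP^n$ by tracking what the rational map $f$ does to the relevant linear system, and to identify precisely which degeneracy the coordinate points $Q_i$ must absorb. Concretely, I would first make the identification of vector spaces $(I_Z)_{(1,\ldots,1)} \subseteq R_{(1,\ldots,1)}$ with a subspace of $S_n = \kappa[z_0, z_{1,1}, \ldots, z_{1,n}]_n$. A multihomogeneous form of multidegree $(1,\ldots,1)$ is a $\kappa$-linear combination of the $2^n$ monomials $x_{\eps_1,1} x_{\eps_2,2} \cdots x_{\eps_n,n}$ with $\eps_i \in \{0,1\}$; dividing through by $x_{0,1}\cdots x_{0,n}$ and using the map $g$ sends such a monomial to $\prod_{i : \eps_i = 1} t_i$ where $t_i = z_{1,i}/z_0$, i.e. to a squarefree monomial in $t_1,\ldots,t_n$ of degree $\le n$. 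Homogenizing with $z_0$ to degree $n$ gives an injection $R_{(1,\ldots,1)} \hookrightarrow S_n$ whose image is the span $W$ of the $2^n$ squarefree monomials $z_0^{n - |A|} \prod_{i \in A} z_{1,i}$, $A \subseteq \{1,\ldots,n\}$. This is an isomorphism onto $W$, and it carries $(I_Z)_{(1,\ldots,1)}$ isomorphically onto $(I_{Z'})_n \cap W$, since $f$ is an isomorphism on the chart containing $Z$ (so passing through conditions at the points of $Z$ is preserved).

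The second, and main, step is to show that the extra conditions cut out by $(n-1)Q_1 + \cdots + (n-1)Q_n$ inside $S_n$ are \emph{exactly} the conditions that force a degree-$n$ form to lie in $W$. So the goal is the identity $W = (I_{(n-1)Q_1 + \cdots + (n-1)Q_n})_n$. For the inclusion $\subseteq$: the point $Q_i$ has ideal $I_{Q_i} = (z_0, z_{1,1}, \ldots, \widehat{z_{1,i}}, \ldots, z_{1,n})$, so a form vanishes to order $\ge n-1$ at $Q_i$ iff, after setting all those variables to parameters and expanding around $Q_i$, every monomial of total degree $\le n-2$ in $\{z_0\} \cup \{z_{1,j} : j \ne i\}$ has zero coefficient — equivalently, every monomial appearing in the form has degree $\ge n-1$ in those $n-1$ variables, hence degree $\le 1$ in $z_{1,i}$. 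A monomial of degree $n$ that has degree $\le 1$ in each $z_{1,i}$ simultaneously (for all $i$) is forced to have degree $0$ in $z_0$ only when its squarefree part is all of $z_{1,1}\cdots z_{1,n}$, and in general is exactly a member of our spanning set $W$; so a form in $W$ satisfies all these vanishing conditions, and conversely a degree-$n$ monomial failing to be in $W$ has $z_{1,i}$-degree $\ge 2$ for some $i$ and hence violates the order of vanishing at $Q_i$. One checks the count is consistent: $\dim (I_{(n-1)Q_i})_n = \binom{2n-1}{n} - \binom{n-1}{2} \cdot 1$-type bookkeeping, but more cleanly one just verifies directly that imposing $(n-1)$-fatness at a single $Q_i$ is the single linear condition ``$z_{1,i}$-degree $\le 1$'' on degree-$n$ monomials (since $n - (n-1) = 1$), and that these $n$ conditions are independent and jointly carve out $W$.

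Granting $W = (I_{(n-1)Q_1 + \cdots + (n-1)Q_n})_n$, we conclude
$$
(I_X)_n = (I_{Z'})_n \cap (I_{(n-1)Q_1 + \cdots + (n-1)Q_n})_n = (I_{Z'})_n \cap W \cong (I_Z)_{(1,\ldots,1)},
$$
which is the claim. The step I expect to be the real obstacle — or at least the one demanding care rather than cleverness — is the second one: correctly translating ``vanishing to order $n-1$ at the monomial point $Q_i$'' into the clean monomial condition ``$z_{1,i}$-degree $\le 1$'', keeping track of the shift by one between the fatness $n-1$ and the degree $n$, and verifying that the $n$ conditions coming from the $n$ distinct coordinate points are independent and their common solution space is no larger than $W$. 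Everything else — the change of variables via $g$ and $f$, the homogenization, and the fact that $f$ being a local isomorphism on the chart preserves the conditions imposed by $Z$ — is formal. (This is the content of the cited Theorem from \cite{CGG4}, so it suffices to recall the argument; the heart of it is the local analysis at the $Q_i$.)
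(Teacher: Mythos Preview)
The paper does not give its own proof of this theorem; it simply cites \cite{CGG4}. Your argument is correct and is essentially the one appearing there: the isomorphism $R_{(1,\ldots,1)} \to W \subset S_n$ onto the span of the $2^n$ monomials $z_0^{\,n-|A|}\prod_{i\in A} z_{1,i}$, together with the key monomial computation that a degree-$n$ form lies in $I_{(n-1)Q_i}$ precisely when every monomial appearing in it has $z_{1,i}$-degree $\le 1$, so that $W = (I_{(n-1)Q_1+\cdots+(n-1)Q_n})_n$. The passage from $(I_Z)_{(1,\ldots,1)}$ to $(I_{Z'})_n \cap W$ is handled correctly by noting that $f$ restricts to an isomorphism of affine charts and that $Z$ is supported there, so scheme-theoretic membership in the ideal is preserved under dehomogenization on both sides.

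Two small remarks on exposition. First, the sentence about ``$\dim (I_{(n-1)Q_i})_n = \binom{2n-1}{n} - \binom{n-1}{2}\cdot 1$-type bookkeeping'' is garbled and should be dropped; the direct monomial argument you give immediately afterward is the actual proof and needs no numerical check. Second, the phrase ``$(n-1)$-fatness at a single $Q_i$ is the single linear condition `$z_{1,i}$-degree $\le 1$'\,'' is imprecise: it is not a single linear condition but rather the set of linear conditions killing every monomial with $z_{1,i}^{\,a_i}$, $a_i\ge 2$. What you mean, and what your computation shows, is that in degree $n$ the subspace $(I_{(n-1)Q_i})_n$ is exactly the span of monomials with $a_i\le 1$; intersecting over $i$ gives $W$.
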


In particular, we get that if $Z=\emptyset = Z'$, so $X = (n-1)Q_1 + ...
+ (n-1)Q_n$, then $\dim I_X=2^n$, while if $Z'=2P_1+...+2P_s$, then the
expected dimension of $I_X$ is ${\rm max} \{2^n-s(n+1) ; 0 \}$.  It follows that:

\begin{cor} \label{cortoproj} Let $Z\subset \PP ^1\times ...\times \PP ^1$
be a generic set of s 2-fat points and let
 $W = 2P_1 + \cdots + 2P_s + (n-1)Q_1 + \cdots + (n-1)Q_n \subset \PP ^n$. 
  Then we have:
$$
\dim  V_n^s = H(Z,(1,...,1))-1 = (2^n-1) - \dim (I_W)_n.
$$
\end{cor}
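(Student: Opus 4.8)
The statement to be proved is Corollary \ref{cortoproj}: for a generic set $Z$ of $s$ 2-fat points in $\PP^1\times\cdots\times\PP^1$ and $W = 2P_1+\cdots+2P_s+(n-1)Q_1+\cdots+(n-1)Q_n\subset\PP^n$, one has $\dim V_n^s = H(Z,(1,\dots,1))-1 = (2^n-1)-\dim(I_W)_n$. The plan is simply to chain together the three ingredients already assembled in this section, so that what is really required is a careful bookkeeping of dimensions rather than any new geometric input.

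First I would invoke Corollary \ref{secandfat} (itself a consequence of Terracini's Lemma, Proposition \ref{Terracini}), which for our polarized variety $(V,{\mathcal L})$ with ${\mathcal L}={\mathcal O}_V(1,\dots,1)$ gives $\dim\sigma_s(V_n) = N-\dim H^0(V,{\mathcal I}_Z({\mathcal L}))$ with $N=2^n-1$. Rewriting $H^0(V,{\mathcal I}_Z({\mathcal L}))$ as the graded piece $(I_Z)_{(1,\dots,1)}$ of the multihomogeneous ideal, and observing that the Hilbert function value $H(Z,(1,\dots,1))$ is by definition $\dim R_{(1,\dots,1)}-\dim(I_Z)_{(1,\dots,1)} = 2^n-\dim(I_Z)_{(1,\dots,1)}$, we get $\dim\sigma_s(V_n) = (2^n-1)-\dim(I_Z)_{(1,\dots,1)} = H(Z,(1,\dots,1))-1$. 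This already identifies the first two quantities in the statement.

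Next I would feed this into Theorem \ref{affineproj}. Since $Z$ is a \emph{generic} collection of 2-fat points, we may assume it lies in the affine chart $\{x_{0,1}\cdots x_{0,n}\neq 0\}$ where the birational map $f$ is defined; setting $Z'=f(Z)$, the scheme $Z'$ is a generic union $2P_1'+\cdots+2P_s'$ of $s$ 2-fat points of $\PP^n$ (genericity is preserved because $f$ restricts to an isomorphism on the relevant open set). Then Theorem \ref{affineproj}, applied with $X = Z'+(n-1)Q_1+\cdots+(n-1)Q_n$, which is exactly the scheme $W$, yields $\dim(I_W)_n = \dim(I_X)_n = \dim(I_Z)_{(1,\dots,1)}$. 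Combining with the previous paragraph gives $\dim\sigma_s(V_n) = (2^n-1)-\dim(I_W)_n$, and writing $V_n^s$ for $\sigma_s(V_n)$ completes the chain of equalities.

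The only point that requires genuine (if minor) care — and the place I would expect a referee to want detail — is the genericity preservation and the specialization hypothesis of Theorem \ref{affineproj}: one must check that a \emph{general} 2-fat point scheme $Z$ on $\PP^1\times\cdots\times\PP^1$ can indeed be placed in the distinguished affine chart and that its image remains a \emph{general} 2-fat point scheme in $\PP^n$, so that the computation of $\dim\sigma_s$ (which a priori involves general points) is not affected by the coordinate restriction. Everything else is a formal substitution, so there is no real obstacle; the work of the paper lies in subsequently \emph{computing} $\dim(I_W)_n$, not in establishing this corollary.
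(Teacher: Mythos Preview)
Your proposal is correct and follows exactly the route the paper intends: the corollary is simply the concatenation of Corollary~\ref{secandfat} (Terracini) with Theorem~\ref{affineproj}, and the paper itself gives no further argument beyond ``It follows that''. Your added remark about genericity preservation under the birational map $f$ makes explicit the one detail the paper leaves implicit.
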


\medskip\noindent{\bf Notation:}\  If $X\subset \PP ^n$ we let 
$I_X$ denote the homogeneous ideal of $X$ in the coordinate ring of $\PP
^n$, while if $X\subset Y\subset \PP ^n$, we write $I_{X,Y}$
for the homogeneous ideal of $X$ in the coordinate ring of $Y$.
\par
If $X, \Pi$ are closed subschemes of $\PP^n$, we denote by $Res_\Pi(X)$ the scheme defined by the ideal
$(I_X:I_\Pi)$ and we call it the ``residual scheme" of $X$ with
respect to $\Pi$, while the scheme $Tr_\Pi(X)\subset \Pi$ is the
schematic intersection $X\cap \Pi$, called the ``trace" of $X$ on
$\Pi$.

\bigskip

Let us recall a classical result which we often use:

 \begin{thm} [Castelnuovo] \label{castelnuovo}
Let $\Pi \subseteq \PP ^n$ be a hyperplane, and let $X \subseteq \PP
^n$ be a  scheme. Then
$$
\dim (I_{X, \PP^n})_t  \leq  \dim (I_{ Res_\Pi X, \PP^n})_{t-1}+
\dim (I_{Tr _{\Pi} X, \Pi})_t.
$$
\end{thm}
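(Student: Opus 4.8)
The plan is to derive Castelnuovo's inequality from the multiplication-by-a-linear-form exact sequence. Let $S$ denote the coordinate ring of $\PP^n$ and let $\ell\in S_1$ be a linear form with $\Pi = V(\ell)$, so that $S/(\ell)$ is the coordinate ring of $\Pi$. Since $\ell$ is a non-zero-divisor in $S$, multiplication by $\ell$ is injective on $I_X\subseteq S$ and on $(I_X:\ell)\subseteq S$; moreover, for the quotient map $I_X \twoheadrightarrow (I_X+(\ell))/(\ell)$ one checks at once that the kernel is $I_X\cap(\ell)$, which equals $\ell\cdot(I_X:\ell)$ because $\ell g\in I_X \iff g\in(I_X:\ell)$. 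Hence I would write down the short exact sequence of graded $S$-modules
$$
0 \;\longrightarrow\; (I_X:\ell)(-1) \;\xrightarrow{\ \cdot\ell\ }\; I_X \;\longrightarrow\; \frac{I_X+(\ell)}{(\ell)} \;\longrightarrow\; 0 .
$$

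Next I would pass to the degree-$t$ graded component, which keeps the sequence exact; using that $M(-1)_t = M_{t-1}$ this gives an exact sequence of finite-dimensional $\kappa$-vector spaces
$$
0 \to (I_X:\ell)_{t-1} \to (I_X)_t \to \Big(\tfrac{I_X+(\ell)}{(\ell)}\Big)_t \to 0 ,
$$
and therefore the \emph{equality} $\dim (I_X)_t = \dim (I_X:\ell)_{t-1} + \dim\big((I_X+(\ell))/(\ell)\big)_t$. It then remains to compare the two outer terms with the quantities in the statement. By the very definition of the residual, $Res_\Pi X$ is cut out by $(I_X:I_\Pi) = (I_X:\ell)$, so $(I_X:\ell)_{t-1} \subseteq (I_{Res_\Pi X,\PP^n})_{t-1}$ (with equality once this ideal is saturated). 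For the trace, the image $(I_X+(\ell))/(\ell)$ of $I_X$ in $S/(\ell)$ is exactly the ideal $I_X\cdot\mathcal{O}_\Pi$ defining the schematic intersection $Tr_\Pi X = X\cap\Pi$ inside $\Pi$; since $I_{Tr_\Pi X,\Pi}$ is its saturation we get $(I_X+(\ell))/(\ell) \subseteq I_{Tr_\Pi X,\Pi}$ and hence $\dim\big((I_X+(\ell))/(\ell)\big)_t \le \dim (I_{Tr_\Pi X,\Pi})_t$. Substituting both inequalities into the displayed equality yields
$$
\dim (I_{X,\PP^n})_t \;\le\; \dim (I_{Res_\Pi X,\PP^n})_{t-1} + \dim (I_{Tr_\Pi X,\Pi})_t ,
$$
which is the assertion.

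A word on where the content actually lies. The module identity above is elementary and \emph{exact}; the only place where information is lost — and hence the only reason the conclusion is an inequality rather than an equality — is the passage from the restricted ideal $(I_X+(\ell))/(\ell)$ to the saturated ideal $I_{Tr_\Pi X,\Pi}$ of the trace. Equivalently, sheafifying gives the exact sequence $0 \to \mathcal{I}_{Res_\Pi X}(-1) \to \mathcal{I}_{X,\PP^n} \to \mathcal{I}_{Tr_\Pi X,\Pi} \to 0$, and the failure of $H^0(\mathcal{I}_{X,\PP^n}(t)) \to H^0(\mathcal{I}_{Tr_\Pi X,\Pi}(t))$ to be surjective is governed by $H^1(\mathcal{I}_{Res_\Pi X}(t-1))$; the stated inequality becomes an equality precisely when that $H^1$ does not obstruct. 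So I do not foresee any genuine obstacle — this is the classical "Castelnuovo exact sequence"; the only subtlety is remembering that one is entitled to just the one-sided estimate, which is precisely the form needed for the differential Horace arguments of the paper.
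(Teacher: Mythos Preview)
Your argument is correct and is the standard proof via the Castelnuovo exact sequence. Note, however, that the paper does not actually supply a proof of this statement: it is simply recalled as a classical result (``Let us recall a classical result which we often use'') and then used throughout. So there is no ``paper's own proof'' to compare against; you have filled in the omitted justification, and done so in the expected way. Your observation that $(I_X:\ell)$ is already saturated when $I_X$ is (so that the residual term is in fact an equality, and the only slack comes from the trace term) is also correct and worth keeping.
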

\medskip

We recall below Theorem 2.3 of \cite{CGG4} which proves ``half" of our Theorem \ref{mainTHM}.  
We rephrase that result as follows:

  \begin{thm} \label{prodottiP1vecchi}  Let  $n, s , x \in \Bbb N$, $n \geq 3$, $x $ even. Let $Q_1,... ,Q_n, P_1,...,P_s $ be generic points in $P^n$. Consider the following scheme
 $$X = (n-1)Q_1+\cdots + (n-1)Q_n + 2P_1+\cdots +2P_s \in \PP^n.$$
 Let
   \[ e =  {\left \lfloor { 2^n \over {n+1}} \right \rfloor }
  \ \ \ , \ \ \  e^* = \left \lceil { 2^n \over {n+1}}  \right  \rceil
   \]
then \par {\rm (i) } if     $s \leq x \leq e $, then
$$\dim (I_X)_n = 2^n -(n+1)s ;$$ \par
{\rm (ii) } if    $s \geq x \geq e^* $, then
$$\dim (I_X)_n =0.$$
    \end{thm}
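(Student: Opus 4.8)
**The plan is to prove Theorem \ref{prodottiP1vecchi} by a double induction on $n$ and $s$, using the Castelnuovo exact sequence (Theorem \ref{castelnuovo}) with respect to a cleverly chosen hyperplane $\Pi$, together with a specialization of some of the generic points onto $\Pi$.** The two parts (i) and (ii) are dual in flavor: in (i) we want to show that the 2-fat points and the fixed $(n-1)$-fat coordinate points impose independent conditions on forms of degree $n$ (so the ideal has the smallest possible dimension $2^n-(n+1)s$), while in (ii) we want to show that the scheme is so large that no degree-$n$ form contains it, i.e. $(I_X)_n = 0$. In both cases the ``expected'' behavior is what has to be established, and by the reductions i) and ii) in the introduction it suffices to handle the two extreme values of $s$ near $e$ and $e^*$; the hypothesis that $x$ is even is exactly what lets us land on the even one of $\lfloor 2^n/(n+1)\rfloor$, $\lceil 2^n/(n+1)\rceil$, which is the case amenable to the inductive step.

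First I would set up the base cases: small $n$ (say $n=3,4,5,6,7$) are to be checked directly, citing the ad-hoc verifications in \cite{CGG4}, \cite{Draisma}, \cite{AOP06} referenced in the introduction. For the inductive step, choose $\Pi$ to be a hyperplane through some subset of the coordinate points $Q_i$ and through a suitably chosen number of the generic points $P_j$ (specialized to lie on $\Pi$). Then $\mathrm{Tr}_\Pi(X)$ is a scheme in $\Pi \cong \PP^{n-1}$ consisting of $(n-1)$-fat and $2$-fat points traced down (a $2$-fat point of $\PP^n$ with support on $\Pi$ traces to a $2$-fat point of $\PP^{n-1}$ plus, effectively, one extra linear condition; an $(n-1)$-fat coordinate point on $\Pi$ traces to an $(n-1)$-fat point of $\PP^{n-1}$), while $\mathrm{Res}_\Pi(X)$ is a scheme of the same shape with the multiplicities of the specialized points dropped by one. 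The aim is to choose the number of specialized $P_j$ so that both $\mathrm{Tr}_\Pi(X)$ in degree $n$ and $\mathrm{Res}_\Pi(X)$ in degree $n-1$ fall under the inductive hypothesis (in dimensions $n-1$ and $n$ respectively, or a mild variant), forcing the Castelnuovo inequality to be an equality and giving the desired dimension count. One must also verify the reverse inequality, i.e. that the conditions really are independent (for (i)) or that the scheme really is too big (for (ii)); typically this follows because the expected dimension is an upper bound from the parameter count and the Castelnuovo bound supplies the matching lower bound, so equality is squeezed.

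The arithmetic bookkeeping is where the work lies. One must check, for each residue class of $n$ modulo $n+1$-related quantities, that the number of $2$-fat points available can be partitioned as ``points specialized onto $\Pi$'' plus ``points left off $\Pi$'' in such a way that both resulting subschemes have parameters meeting the inductive hypotheses exactly — neither too few points (which would leave slack in Castelnuovo and fail to prove independence) nor too many (which would make a subscheme overdetermined and break the induction). The coordinate points $Q_i$ with multiplicity $n-1$ behave rigidly under passage to $\PP^{n-1}$: dropping a hyperplane through $Q_n$, say, turns $(n-1)Q_n$ into $(n-2)Q_n$ on the residual side (which is exactly the multiplicity $n-1$ wants to be for dimension $n-1$) while keeping $(n-1)Q_n$ on the trace side (one too many for $\PP^{n-1}$), so some further local analysis of how an $(n-1)$-fat point on a hyperplane restricts is needed. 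Handling these ``boundary'' contributions of the fixed points, and making the even-$x$ hypothesis propagate correctly through the induction, is the delicate part.

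**I expect the main obstacle to be the specialization/counting step**: finding, uniformly in $n$, the right hyperplane and the right number of $P_j$'s to place on it so that the Castelnuovo sequence degenerates to an equality, and then verifying the numerous resulting numerical inequalities — this is precisely the ``artistry in dividing'' and the mass of ``tedious verifications'' flagged in the introduction. A secondary difficulty is that a naive specialization may not suffice, and one may be forced to use the differential Horace refinement (replacing the specialized scheme by an enlarged one with extra linear-space components) already alluded to in the introduction; if so, the bookkeeping for the enlarged scheme in the single degree $n$ must be carried out and reconciled with the plain scheme's ideal in degree $n-1$. Once the specialization is pinned down, the rest is induction and arithmetic.
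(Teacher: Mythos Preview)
This theorem is not proved in the present paper at all: it is explicitly stated as a restatement of Theorem~2.3 of \cite{CGG4} (``We recall below Theorem 2.3 of \cite{CGG4} which proves `half' of our Theorem~\ref{mainTHM}. We rephrase that result as follows:''). So there is no proof here to compare your proposal against; the paper simply imports the result as a black box and then uses it as the starting point for the new work in Sections~3 and~4.

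That said, your proposal is not a proof but a plan, and even as a plan it mixes up two distinct levels of difficulty. The statement in question handles exactly the \emph{easy} parity (the even one among $\lfloor 2^n/(n+1)\rfloor$ and $\lceil 2^n/(n+1)\rceil$), and the method in \cite{CGG4} for that case is a comparatively straightforward specialization-plus-Castelnuovo induction in the spirit you describe; it does \emph{not} require the ``enlarged scheme'' / differential Horace device. That machinery (adding the extra linear spaces $\Lambda_i$ and lines $L_j$ as fixed components, together with the $J^{(1)}$/$J^{(2)}$ schemes of Lemmas~\ref{lemmaxresiduo} and~\ref{lemmaxtraccia}) is precisely what the present paper introduces to handle the \emph{odd} case, i.e.\ the half of Theorem~\ref{mainTHM2} that Theorem~\ref{prodottiP1vecchi} does not cover. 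By suggesting the differential Horace refinement ``may be forced'' here, you are importing the hard technique into the easy case. If you want to write an actual proof of this statement, you should carry out the plain Castelnuovo induction concretely (specifying which hyperplane, how many $P_j$ go on it, and verifying the numerical inequalities), or else simply cite \cite{CGG4} as the paper does.
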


  \section{Lemmata on certain linear systems}

This section contains several lemmata regarding fat points in
projective space which will be used in the proof of the main theorem
in the next section.

\begin{lem}\label{lemfixcomp}
Let $n, i, m \in \Bbb N$, $ n \geq 2$,  $ m >i$ and $1 \leq i \leq n$. Let $Q_1,... ,Q_{1+i} \in \PP^n$ be  $1+i$ generic points.  Consider
 the linear span $H = <Q_1,...,Q_{i+1}>\cong \PP ^{i} $, and the following scheme of fat points
$$X = m Q_1+\cdots + m Q_{i+1}. $$

{\rm (i)} If $i=n$, then $(I_X)_{m+1}= (0)$;
\par
{\rm (ii)} If $i<n$ then  $H$ is a fixed component,
with multiplicity $m-i$, for the hypersurfaces defined by the forms of $(I_X)_{m+1}$.
\end{lem}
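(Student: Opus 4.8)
The plan is to prove (i) and (ii) together by a dimension count combined with a residuation argument along the linear space $H$. Part (i) is the base/extreme case: when $i = n$ we have $1+i = n+1$ generic points spanning all of $\PP^n$, so $H = \PP^n$ and the scheme $X = mQ_1 + \cdots + mQ_{n+1}$ is a union of $n+1$ generic $m$-fat points. A form of degree $m+1$ vanishing to order $m$ at $n+1$ general points imposes $\binom{m+n}{n}\cdot(n+1)$ conditions in general, which for $n+1$ general points in $\PP^n$ is classically known to be the exact number (this is the easy, non-special range of the Alexander--Hirschowitz story: $n+1$ points is a "small" number and the fat point scheme imposes independent conditions up to the degree where a single such fat point already forces vanishing); more elementarily, one can place the $n+1$ points at the coordinate points and the point $(1{:}1{:}\cdots{:}1)$ and check directly that no monomial of degree $m+1$ survives, since vanishing to order $m$ at a coordinate point kills all monomials using $\le m-1$ of the "other" variables, and running over all $n+1$ points leaves nothing. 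So $(I_X)_{m+1} = (0)$.

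For part (ii), assume $i < n$ and let $F \in (I_X)_{m+1}$. The key step is to restrict $F$ to $H \cong \PP^i$ and invoke part (i) in dimension $i$: the trace $Tr_H(X)$ is exactly $mQ_1 + \cdots + mQ_{i+1}$ viewed inside $H$, which is $i+1$ generic points of $\PP^i$, each $m$-fat; by part (i) applied with $n$ replaced by $i$, any form of degree $m+1$ on $H$ vanishing on $Tr_H(X)$ is zero, i.e. $F|_H \equiv 0$. Hence the linear form $\ell$ cutting out $H$ divides $F$, say $F = \ell F_1$. Now $F_1$ has degree $m$ and, because $H$ passes through each $Q_j$ transversally to the generic tangent directions, one checks that $F_1$ must vanish to order $m-1$ at each $Q_j$: indeed $Res_H(X)$ at each $Q_j$ is the $(m-1)$-fat point $(m-1)Q_j$ (restricting the ideal $\mathfrak{m}_{Q_j}^m$ by dividing out one copy of $\ell \in \mathfrak{m}_{Q_j}$), so $F_1 \in (I_{(m-1)Q_1 + \cdots + (m-1)Q_{i+1}})_m$. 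Iterating this — at each stage the trace on $H$ of $(m-j)Q_1 + \cdots + (m-j)Q_{i+1}$ is $i+1$ generic $(m-j)$-fat points of $\PP^i$, a form of degree $m-j+1$... wait, one must be careful to iterate at the right degrees — we peel off $\ell$ a total of $m-i$ times: after dividing by $\ell^{k}$ we have a form of degree $m+1-k$ required to vanish to order $m-k$ at the $i+1$ points, and this forces a further factor of $\ell$ exactly as long as $m-k+1 \le $ (the degree at which $i+1$ general $(m-k)$-fat points in $\PP^i$ have no sections is violated), i.e. as long as $m - k \ge i$... more precisely the restriction to $H$ kills the form whenever its degree is $\le (m-k)+1$ and it vanishes to order $m-k$ at $i+1$ general points of $\PP^i$, which by the $\PP^i$-analog of (i) holds precisely when $m-k > i-1$, i.e. $k \le m-i$. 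So $\ell^{m-i} \mid F$, which is the assertion that $H$ is a fixed component of multiplicity (at least) $m-i$.

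The main obstacle I expect is the bookkeeping in the iteration: one must verify at each peel-off step that the hypothesis "$m' > i'$" of the inductive instance of (i) is met and that residuating by $\ell$ genuinely drops the multiplicity at every $Q_j$ by exactly one (this uses the genericity of the $Q_j$ relative to $H$ — $H$ is spanned by $i+1$ of them, so it is "as transverse as possible" at each $Q_j$, and no extra tangency is imposed). A clean way to organize this, and the one I would write up, is as a downward induction on $m-i \ge 1$: the case $m - i = 1$ is literally part (i) applied on $H$ after one division by $\ell$, and the inductive step divides by $\ell$ once and replaces $(X, m)$ by $(Res_H(X), m-1)$, reducing $m-i$ by $1$. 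This avoids re-deriving the degree inequalities each time and isolates the single nontrivial input, namely that $Tr_H$ of a union of generic equimultiple fat points supported at points spanning $H$ is the union of the same fat points inside $H$ — which is immediate from the definition of trace together with the transversality coming from genericity.
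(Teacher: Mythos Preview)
Your argument for (ii) has a genuine gap: the linear space $H = \langle Q_1, \dots, Q_{i+1}\rangle \cong \PP^i$ is a hyperplane in $\PP^n$ only when $i = n-1$. For $i < n-1$ (and the lemma is stated for all $1 \le i \le n$, and is later applied in the paper to $\PP^3$'s sitting inside $\PP^n$ with $n$ large), the ideal $I_H$ is generated by $n-i > 1$ linear forms, so ``the linear form $\ell$ cutting out $H$'' simply does not exist, and $F|_H = 0$ only gives $F \in I_H$, not $F = \ell F_1$. Your entire peel-off iteration rests on this division step, so it collapses outside the hyperplane case. (A minor slip in (i): you speak of placing the points at the coordinate points \emph{and} at $(1{:}\cdots{:}1)$, but there are exactly $i+1 = n+1$ points, which is already the number of coordinate points.)

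The paper's proof avoids residuation altogether with a two-line monomial computation in coordinates. Taking the $Q_j$ to be the first $i+1$ coordinate points, a monomial $x_1^{a_1}\cdots x_{n+1}^{a_{n+1}}$ of degree $m+1$ lies in $I_{Q_j}^m$ iff $a_j \le 1$; hence any monomial in $(I_X)_{m+1}$ has $a_1,\dots,a_{i+1} \le 1$, forcing $a_{i+2}+\cdots+a_{n+1} \ge (m+1)-(i+1) = m-i$, i.e.\ the monomial lies in $(x_{i+2},\dots,x_{n+1})^{m-i} = I_H^{m-i}$. Part (i) drops out of the same observation when $i = n$, since then there are no variables left to carry the excess degree. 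If you wanted to rescue a residuation-style proof you would have to work with a flag of hyperplanes through $H$ and track membership in powers of $I_H$ rather than divisibility by a single form---considerably more bookkeeping than the direct argument.
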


\begin{proof}
Let  $\kappa[ x_{1}, \dots, x_{n+1}]$ be the
homogeneous coordinate ring of $\PP ^n$.
We may assume that the $Q_i$ are coordinate points and so $$I_{Q_j} = (x_1, \dots  ,x_{j-1},x_{j+1}, \dots  , x_{n+1}).$$  Thus,
$$I_X= (x_2, \dots  ,x_{n+1})^m \cap \dots \cap  (x_1, \dots  ,x_{i}, x_{i+2}, \dots  , x_{n+1})^m.$$
{\rm (i)} For $i=n$, we have 
$$I_X= (x_2, \dots  ,x_{n+1})^m \cap \dots \cap 
 (x_1, \dots  ,x_{n-1},  x_{n+1})^m  \cap 
 (x_1, \dots  ,x_{n})^m   .$$
 If  $ f= x_1^{a_1} \cdot   \ldots   \cdot  x_{n+1}^{a_{n+1}}  \in (I_X)_{m+1}$, then
  $ \sum_{j=1} ^{n+1} a_j = m+1 >n+1$. Hence there exists a $j$ with $a_j \geq 2$.
It follows that 
$f \not\in (x_1, \dots  ,x_{j-1}, x_{j+1}, \dots  , x_{n+1})^m  $, a contradiction.

 \par
\noindent
{\rm (ii)} Let  $i\leq n-1$. We have 
$$I_{H} =   (x_{i+2}, \dots  , x_{n+1}),  $$
$$(I_X)_{m+1}= < \{ x_1^{a_1} \cdot  \ldots  \cdot x_{i+1}^{a_{i+1}}
 \cdot  x_{i+2}^{a_{i+2}} \cdot  \ldots   \cdot  x_{n+1}^{a_{n+1}}  \} >$$
where $ \sum_{j=1} ^{n+1} a_j = m+1$, and  $a_j \leq 1$ for $1 \leq j \leq i+1$.
Hence     
$$\deg    (x_1^{a_1} \cdot  \ldots  \cdot x_{i+1}^{a_{i+1}})   \leq i+1 , \hskip1cm 
\deg    (x_{i+2}^{a_{i+2}} \cdot  \ldots   \cdot  x_{n+1}^{a_{n+1}} )   \geq m-i  ,$$
thus
$(I_X)_{m+1} \subset   (x_{i+2}, \dots  , x_{n+1})^{m-i} 
$
 follows.
\end{proof}

\bigskip

From the previous result and Theorem \ref{castelnuovo} we obtain
the following easy, but very useful, remark:
\bigskip

\begin{lem}\label{lemzero}
Let $Y$ be a subscheme of $\PP ^n$, and let $Q_1,... ,Q_n \in \PP^n$
be  generic points.  Consider the scheme theoretic union
 $$X = (n-1)Q_1+\cdots + (n-1)Q_n +Y.$$
 Let $\Pi  \subset  \PP ^{n}$ be a hyperplane through $Q_2,... ,Q_n$, which does not contain $Q_1$;
 let  $W \subset \Pi $ be the projection of  $Res_\Pi X$ into $\Pi$  from $Q_1$, and
 let $\Pi' =  <Q_2,... ,Q_n> \simeq \PP^{n-2}$.
  Let
 $$T= Res_ {\Pi '} (Tr _{\Pi} X ) \subset \Pi .
 $$
Then
$$\dim (I_{X,\PP^n})_n \leq   \dim (I_{W, \Pi})_{n-1} + \dim (I_{T, \Pi})_{n-1} .$$
\end{lem}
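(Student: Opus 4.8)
The plan is to apply Castelnuovo's inequality (Theorem \ref{castelnuovo}) to $X$ with respect to the hyperplane $\Pi$, and then to massage each of the two resulting terms into the form stated. Writing out Castelnuovo for degree $n$ gives
$$
\dim (I_{X,\PP^n})_n \leq \dim (I_{Res_\Pi X,\PP^n})_{n-1} + \dim (I_{Tr_\Pi X,\Pi})_n.
$$
So it suffices to show $\dim (I_{Res_\Pi X,\PP^n})_{n-1} \leq \dim (I_{W,\Pi})_{n-1}$ and $\dim (I_{Tr_\Pi X,\Pi})_n \leq \dim (I_{T,\Pi})_{n-1}$.

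For the first inequality I would argue as follows. Since $\Pi$ passes through $Q_2,\dots,Q_n$ but not $Q_1$, the residual $Res_\Pi X$ contains the point $Q_1$ with multiplicity $n-1$ (the full fat point $(n-1)Q_1$ is disjoint from $\Pi$, so it survives residuation untouched), together with the residuals of $(n-1)Q_i$ for $i\geq 2$ and of $Y$, all of which lie in... well, the key point is that $Q_1\notin\Pi$. A form of degree $n-1$ vanishing on $(n-1)Q_1$ together with the rest of $Res_\Pi X$: projecting from $Q_1$ into $\Pi$ is the standard device — a degree $n-1$ hypersurface singular to order $n-1$ at $Q_1$ is, after projection from $Q_1$, forced to be a cone, hence pulled back from a degree $n-1$ form on $\Pi$ vanishing on the projection $W$ of the rest. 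More precisely, $(I_{Res_\Pi X})_{n-1}$ injects into $(I_{W,\Pi})_{n-1}$ because any degree-$(n-1)$ form with a point of multiplicity $\geq n-1$ at $Q_1$ must be (a constant times) a product of $n-1$ linear forms through $Q_1$, equivalently the pullback under projection from $Q_1$ of a degree-$(n-1)$ form on $\Pi$; and that form must vanish on $W=$ projection of $Res_\Pi(X)$ minus the $Q_1$ part. This is exactly the mechanism by which the coordinate points $Q_i$ get ``used up'' in the inductions, and it is the step I expect to require the most care in getting the scheme-theoretic bookkeeping right (in particular, verifying that nothing is lost when one passes to the projection, i.e. that the map on degree-$(n-1)$ pieces is genuinely injective rather than merely a specialization).

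For the second inequality, the trace $Tr_\Pi X = X\cap \Pi$ lives inside $\Pi\cong\PP^{n-1}$, and here I invoke Lemma \ref{lemfixcomp}(ii): the scheme $Tr_\Pi X$ contains, among its components, the fat points cut out on $\Pi$ by $(n-1)Q_2,\dots,(n-1)Q_n$, whose supports are the $n-1$ generic points $Q_2,\dots,Q_n$ spanning the hyperplane $\Pi' = \langle Q_2,\dots,Q_n\rangle\cong\PP^{n-2}$ of $\Pi$. With $i = n-2$, $m = n-1$, and ambient $\PP^{n-1}$ — note $m > i$ and $i < n-1$ — Lemma \ref{lemfixcomp}(ii) says $\Pi'$ is a fixed component of multiplicity $m-i = 1$ for the forms of degree $(m+1) = n$ in that ideal. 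Hence every form in $(I_{Tr_\Pi X,\Pi})_n$ is divisible by the (unique up to scalar) linear form $\ell$ defining $\Pi'$ in $\Pi$, giving an isomorphism $(I_{Tr_\Pi X,\Pi})_n \cong (I_{Tr_\Pi X,\Pi}:\ell)_{n-1}$. Dividing out $\ell$ removes the $\Pi'$-component and leaves precisely the residual scheme $T = Res_{\Pi'}(Tr_\Pi X)$; that is, $(I_{Tr_\Pi X,\Pi}:\ell)_{n-1} = (I_{T,\Pi})_{n-1}$. Combining the two inequalities with Castelnuovo yields the claim. The main subtlety to watch is that the fixed-component statement of Lemma \ref{lemfixcomp} applies verbatim here because the remaining components of $Tr_\Pi X$ (the traces of $(n-1)Q_1$ — empty — of $Y$, and possibly extra generic behaviour) only shrink the linear system further, so the divisibility by $\ell$ persists, and that ``dividing by $\ell$'' and ``taking the residual with respect to $\Pi'$'' literally coincide as ideal operations in degree $n-1$.
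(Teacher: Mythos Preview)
Your proposal is correct and follows essentially the same route as the paper: apply Castelnuovo's inequality, handle the residual term via the cone-from-$Q_1$ argument (the paper phrases this as ``any form of degree $n-1$ in $I_{Res_\Pi X}$ represents a cone with $Q_1$ as vertex'', which is exactly your projection argument), and handle the trace term by invoking Lemma~\ref{lemfixcomp}(ii) with the parameters you wrote down to peel off $\Pi'$ as a fixed component of multiplicity one. The paper in fact asserts equality (not just $\leq$) for both terms, but your inequalities suffice for the stated conclusion.
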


\begin{proof} By Theorem \ref{castelnuovo} we know that
$$
\dim (I_{X,\PP^n})_n  \leq  \dim (I_{ Res_\Pi X, \PP^n})_{n-1}+ \dim
(I_{Tr _{\Pi} X, \Pi})_n.
$$
Since $ Res_\Pi X = (n-1)Q_1+(n-2)Q_2+\cdots + (n-2)Q_n + Res_\Pi
Y$, then any form of degree $(n-1)$ in $I_{ Res_\Pi X}$ represents a
cone with $Q_1$ as vertex. It follows that $ \dim (I_{ Res_\Pi X,
\PP^n})_{n-1} =  \dim (I_{W, \Pi})_{n-1} .$

Since $ Tr_\Pi X = (n-1)Q_2+\cdots + (n-1)Q_n + Tr_\Pi Y$, by
Lemma \ref{lemfixcomp} it follows that the linear
space $\Pi'$ spanned by the points $ Q_2,... ,Q_n $ is a fixed
component for the hypersurfaces defined by the forms of 
$(I_{Tr_{\Pi} X, \Pi})_n$. 
Hence $\dim (I_{Tr _{\Pi} X, \Pi})_n = 
\dim (I_{Res _{\Pi ' } {Tr _{\Pi} X, \Pi}})_{n-1}
 = \dim (I_{T, \Pi})_{n-1}$. \par
 So we get
 $$
\dim (I_{X})_n  \leq \dim (I_{W, \Pi})_{n-1} + \dim (I_{T, \Pi})_{n-1}  ,
$$
and we are done.

\end{proof}

The following three lemmata are quite technical, but they will be essential in proving the main theorem.
\par
\begin{lem}\label{duepuntinixpiano}
Let $Y$  be a subscheme of $\PP ^m$, with $m \geq 3$. Let $x \in \Bbb N$.

Choose planes $H_1, \dots, H_x$ in $\PP ^m$ and denote by $R_i$ a generic point on $H_i$. Write 
 $ J^{(1)}_{H_i}$ for the union of two generic simple points with support on $H_i$  ($1 \leq i \leq x$).

Consider the  schemes
 $$X = Y+J^{(1)}_{H_1}+\cdots +J^{(1)}_{H_x}$$
and
  $$X' = Y+2R_1+\cdots +2R_x.$$

If $$\dim (I_{X'})_m =\dim (I_{Y})_m - x(m+1) ,$$ then
 $$\dim (I_{X})_m =\dim (I_{Y})_m - 2x . $$
 \end{lem}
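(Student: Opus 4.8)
The plan is to compare the two schemes $X$ and $X'$ degree by degree in degree $m$, using the fact that $J^{(1)}_{H_i}$ (two generic simple points on the plane $H_i$) imposes at most $2$ conditions, while $2R_i$ (a $2$-fat point of $\PP^m$) imposes at most $m+1$ conditions. Writing $d = \dim(I_Y)_m$, semicontinuity and the obvious inclusions of ideals give immediately
\[
\dim(I_X)_m \ge d - 2x
\qquad\text{and}\qquad
\dim(I_{X'})_m \ge d - x(m+1).
\]
So the hypothesis $\dim(I_{X'})_m = d - x(m+1)$ says that the $x$ fat points $2R_i$ impose \emph{independent} conditions on $(I_Y)_m$; the goal is to upgrade this to the statement that the $x$ pairs $J^{(1)}_{H_i}$ also impose independent conditions, i.e.\ $\dim(I_X)_m \le d - 2x$, and then the two inequalities for $X$ force equality.

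The key observation is that for a generic point $R_i$ on the plane $H_i$, the trace on $H_i$ of the $2$-fat point $2R_i$ of $\PP^m$ is exactly a $2$-fat point of the plane $H_i$, which (being a plane) imposes $3$ conditions, and $J^{(1)}_{H_i}$ — two generic points of $H_i$ — is a \emph{subscheme} of that planar $2$-fat point, imposing $2$ of those $3$ conditions. More usefully, one can degenerate $2R_i$ to $J^{(1)}_{H_i}$ together with one extra condition in the normal direction. I would make this precise by choosing, for each $i$, two generic points $R_{i}', R_{i}''$ on $H_i$ so that $J^{(1)}_{H_i} = R_i' + R_i''$, and noting that a $2$-fat point $2R_i$ with $R_i$ generic on $H_i$ \emph{specializes} to a scheme containing $R_i' + R_i'' + (\text{one length-}1\text{ normal jet})$; hence
\[
\dim(I_{Y + 2R_1 + \cdots + 2R_x})_m \ge \dim(I_{Y + J^{(1)}_{H_1} + \cdots + J^{(1)}_{H_x}})_m - (\text{codim of the normal jets}).
\]
Since each normal jet is a single extra simple condition beyond the pair, the schemes $Y + J^{(1)}_{H_1}+\cdots+J^{(1)}_{H_x}$ and $Y + 2R_1+\cdots+2R_x$ differ, in degree $m$, by at most $x$ conditions; combined with the hypothesis this pins down $\dim(I_X)_m$ from below by $d - x(m+1) + \text{(something)}$ — and I must be careful that the arithmetic closes.

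Rather than push the degeneration argument, the cleaner route — and the one I would actually carry out — is a direct \emph{counting} argument: by the first displayed inequality $\dim(I_X)_m \ge d - 2x$, so it suffices to prove $\dim(I_X)_m \le d - 2x$. Suppose not; then the $x$ pairs $J^{(1)}_{H_i}$ fail to impose independent conditions on $(I_Y)_m$, so some subset imposes a dependent condition, meaning there is a nonzero $F \in (I_Y)_m$ vanishing on all but (the last point of) one pair, say $J^{(1)}_{H_x}$, and vanishing at one of the two points $R_x', R_x''$ but not the other. Now enlarge: replace $F$'s constraints by the full planar $2$-fat points. Because $\dim(I_{X'})_m = d - x(m+1)$ forces the $x$ fat points $2R_i$ to be independent on $(I_Y)_m$ for \emph{generic} $R_i \in H_i$, and the conditions imposed by $J^{(1)}_{H_i}$ are a subset of those imposed by $2R_i$, a specialization/semicontinuity argument moving each $2R_i$ to $H_i$-general position shows the $J^{(1)}_{H_i}$ must already be independent — otherwise one could produce, in the limit, a form in $(I_Y)_m$ violating independence of the $2R_i$. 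Formally, I would set up the two ideals side by side, use Theorem~\ref{castelnuovo} (or just the inclusion of ideals) to relate the Hilbert functions, and chase the inequalities.

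The step I expect to be the main obstacle is making the ``difference is at most $x$ conditions'' comparison rigorous without a full normal-bundle computation: one needs to know precisely that, in degree exactly $m$, passing from the planar $2$-fat point $3\cdot(\text{pt})$ on $H_i$ to the two simple points $2\cdot(\text{pt})$ on $H_i$ drops exactly one condition \emph{and that this one dropped condition is ``transverse''} to the conditions coming from $Y$ and the other $H_j$. This is exactly the kind of ``differential Horace'' bookkeeping flagged in the introduction, and the genericity of the $R_i$ on the $H_i$ (together with the hypothesis, which is what guarantees no unexpected cancellation among the fat points) is what rescues it. Once that single-condition transversality is in hand, the equality $\dim(I_X)_m = d - 2x$ follows by squeezing between the two inequalities, and the proof is complete.
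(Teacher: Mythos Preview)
Your argument has a real gap. The assertion that ``$J^{(1)}_{H_i}$ --- two generic points of $H_i$ --- is a subscheme of that planar $2$-fat point'' is simply false: the planar $2$-fat point $Tr_{H_i}(2R_i)$ is supported at the \emph{single} point $R_i$, whereas $J^{(1)}_{H_i}$ consists of two \emph{distinct} points of $H_i$. There is no containment of schemes in either direction, and so the later claim that ``the conditions imposed by $J^{(1)}_{H_i}$ are a subset of those imposed by $2R_i$'' is unjustified for a generic pair $J^{(1)}_{H_i}$. Your arithmetic is also off: $2R_i$ in $\PP^m$ has length $m+1$, so the discrepancy between it and any length-$2$ subscheme is $m-1$ conditions, not one; the ``one extra condition in the normal direction'' picture would only be correct if you were comparing with the planar trace $Tr_{H_i}(2R_i)$, but that is not the scheme appearing in $X'$.

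The paper's proof supplies the missing idea in a single move: \emph{specialize} the two simple points of each $J^{(1)}_{H_i}$ so that they collide at $R_i$ along a line $L_i\subset H_i$ through $R_i$. The limit is the length-$2$ scheme $Tr_{L_i}(2R_i)$, and this \emph{is} a genuine subscheme of $2R_i$. Writing $\tilde X = Y + Tr_{L_1}(2R_1)+\cdots+Tr_{L_x}(2R_x)$ for the resulting specialization of $X$, one has $\tilde X \subset X'$; the hypothesis that the $2R_i$ impose independent conditions on $(I_Y)_m$ then forces $\dim(I_{\tilde X})_m = d - 2x$. Semicontinuity gives $\dim(I_X)_m \le \dim(I_{\tilde X})_m = d - 2x$, and together with the trivial lower bound you already noted, equality follows. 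No ``differential Horace'' transversality analysis is required here; the whole content is the inclusion $\tilde X \subset X'$ plus semicontinuity.
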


 \begin{proof}
Let $L_i$ be a line through $R_{i}$ lying on $H_i$  ($1 \leq i \leq
x$), and consider the following specialization $\tilde X$ of $X$
 $$\tilde X = Y+Tr _{L_1} 2R_1+\cdots +Tr _{L_x}2R_{x}.$$
 Since $ \tilde X \subset X'$, and  $\dim (I_{X'})_m =\dim (I_{Y})_m - x(m+1) ,$ that is
the $x$ double points $2R_{i}$ impose independent conditions to the
forms of $(I_{Y})_m$, we get
 $$\dim (I_{\tilde X})_m = \dim (I_{Y})_m  - 2x.$$
The conclusion follows by the semicontinuity of the Hilbert
function.
 \end{proof}

\medskip

\begin{lem}\label{lemmaxresiduo}\ (Residue Lemma)
Let  $x,y,m \in \Bbb N$, $m \geq 3$, $0 \leq x \leq \lfloor{{m-1}
\over 2} \rfloor$. Let $Q_1,... ,Q_m$ and $R_{1},...,R_{y}$ be
generic points in $P^m$. Let  $ J^{(2)}_{H_i}$ be the union of two
generic double points $2P_{2i-1}$, $2P_{2i}, $ with support on  a
generic plane $H_i$ through $Q_{2i}$, $Q_{2i+1}$  ($1 \leq i \leq
x$). Consider the following  scheme
 $$
X = (m-1)Q_1+\cdots + (m-1)Q_m +J^{(2)}_{H_1}+ \cdots + J^{(2)}_{H_x} + 2R_{1}+\cdots +2R_{y}.
$$

{\rm (i) } If $x=0$ and $y=0 $, that is
$$X = (m-1)Q_1+\cdots + (m-1)Q_m $$ we have
$$\dim (I_X)_m = 2^m .$$

{\rm (ii) } If $x=1$ and $y=0 $, that is
$$X = (m-1)Q_1+\cdots + (m-1)Q_m +J^{(2)}_{H_1}$$ we have
$$\dim (I_X)_m = 2^m - 2m.$$

{\rm (iii) } If  $x \leq x'$, $y  \leq y' $, and
$$\dim (I_{X'})_m = 2^m - 2mx'-(m+1)y',$$
  then
 $$\dim (I_{X})_m = 2^m - 2mx-(m+1)y,$$
 where
$$X'= (m-1)Q_1+\cdots + (m-1)Q_m +J^{(2)}_{H_1}+ \cdots + J^{(2)}_{H_{x'}} + 2R_1+\cdots +2R_{y'}\subseteq \PP ^m . $$

{\rm (iv) } $$\dim (I_X)_m \geq 2^m - 2mx-(m+1)y.$$

{\rm (v.1) } If  $m =4$, $x=1$, $y=1 $, that is
 $$X=3Q_1+\cdots + 3Q_4 +J^{(2)} _{H_1}+ 2R_1, $$
 then
$$\dim (I_X)_4 = 2^m - 2mx-(m+1)y=3 .$$

{\rm (v.2) } Let $H$ be a  generic plane through $Q_1,Q_4$, let
$J_H^{(1)}=P_1+P_2$ be union of two generic simple points lying on
$H$.   If  $m =4$, $x=1$, $y=1 $, consider the scheme
 $$X'=X+P_1+P_2=3Q_1+\cdots + 3Q_4 +J ^{(2)}_{H_1}+ J^{(1)}_H+2R_1, $$
 then
$$\dim (I_{X'})_4 = 2^m - 2mx-(m+1)y-2=1 .$$

{\rm (vi) } If  $m =5$, $x=1$, $y=3 $,
 that is
 $$X=4Q_1+\cdots + 4Q_5 +J^{(2)}_{H_1}+ 2R_1+\cdots + 2R_3 ,$$
 then
$$\dim (I_X)_5 =  2^m - 2mx-(m+1)y= 4 . $$

{\rm (vii) } If   $x$, $y $ are even,  $0 \leq y \leq \lfloor{{2^{m}
-2mx }\over {m+1}} \rfloor$,
  then
$$\dim (I_X)_m = 2^m - 2mx-(m+1)y. $$
\end{lem}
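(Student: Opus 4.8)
The plan is to prove the upper bound $\dim (I_X)_m \leq 2^m - 2mx - (m+1)y$, the reverse inequality being exactly part (iv). A first reduction is given by (iii): fixing the even integer $x$ and enlarging $X$ by the double points $2R_{y+1},\ldots,2R_{\bar y}$ with $\bar y = \lfloor (2^m-2mx)/(m+1)\rfloor$, part (iii) shows that it is enough to prove the equality for $y=\bar y$, every smaller even $y$ then following from (iii) once more. Hence we may assume $0 \leq 2^m - 2mx - (m+1)y < m+1$; that is, we are reduced to showing, for each even $x$ with $0 \leq x \leq \lfloor (m-1)/2\rfloor$ separately, that the linear system $(I_X)_m$ has this small (``subabundant'') dimension.

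I would argue by induction on $m$. For $m = 3, 4$ the hypotheses force $x = 0$, so $X = (m-1)Q_1 + \cdots + (m-1)Q_m + 2R_1 + \cdots + 2R_y$ with $y$ even, and the assertion is a particular case of Theorem \ref{prodottiP1vecchi}. For $m = 5$ one also meets $x = 2$; for the two configurations $(x,y)=(2,0)$ and $(2,2)$ I would split off a hyperplane $\Pi \supseteq \langle Q_2,\ldots,Q_5\rangle$ missing $Q_1$, apply Lemma \ref{lemzero}, and evaluate the resulting residual and trace systems on $\PP^4$ by means of parts (v.1), (v.2), (vi) and Lemma \ref{duepuntinixpiano}.

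For the inductive step I would apply Lemma \ref{lemzero} with $Y = J^{(2)}_{H_1} + \cdots + J^{(2)}_{H_x} + 2R_1 + \cdots + 2R_y$ and a hyperplane $\Pi$ through $Q_2,\ldots,Q_m$ missing $Q_1$, which gives
$$\dim (I_X)_m \leq \dim (I_{W,\Pi})_{m-1} + \dim (I_{T,\Pi})_{m-1},$$
where $\Pi \cong \PP^{m-1}$ carries the $(m-2)$-fold points $Q_2,\ldots,Q_m$, so that $W$ and $T$ are again schemes built from $(m-2)$-fold coordinate points together with $J^{(2)}$-type and double-point configurations. The crux is the specialization: one declares the planes $H_i$ (which already pass through the two points $Q_{2i},Q_{2i+1}$, both lying on $\Pi$ since $2x+1\leq m$) and the double points $2R_j$ to lie on $\Pi$. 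Then each $J^{(2)}_{H_i}$ contributes a full $J^{(2)}$ to $T$ and, after residuation by $\Pi$ and projection from $Q_1$, only a pair of simple points on a plane (a $J^{(1)}$) to $W$, while each $2R_j$ contributes a double point to $T$ and a single simple point to $W$. The inductive hypothesis for $\PP^{m-1}$ bounds $\dim(I_{T,\Pi})_{m-1}$ by $2^{m-1} - 2(m-1)x - my$ (this uses that $x$ and $y$ are even), and Lemma \ref{duepuntinixpiano} on $\PP^{m-1}$ — with the pure double-point model furnished by Theorem \ref{prodottiP1vecchi}, again since $x$ is even — bounds $\dim(I_{W,\Pi})_{m-1}$ by $2^{m-1} - 2x - y$; the residuals $Res_{\Pi'}(Tr_\Pi X)$ entering here are governed by Lemma \ref{lemfixcomp}. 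Adding the two bounds yields exactly $2^m - 2mx - (m+1)y$, as wanted.

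The main obstacle is precisely the arithmetic of this split. The inductive hypothesis for $T$ requires $x \leq \lfloor (m-2)/2\rfloor$ and $y \leq \lfloor (2^{m-1}-2(m-1)x)/m\rfloor$; since $\lfloor(m-1)/2\rfloor$ may exceed $\lfloor(m-2)/2\rfloor$ (the residue of $m$ modulo $4$ intervenes) and $m$ divides neither $2^{m-1}$ nor $2(m-1)$, for the extremal values of $x$ one must either distribute the $J^{(2)}$'s between $W$ and $T$ unevenly, or correct by adding a few generic double points and invoking (iii) and (iv), or — in a handful of genuinely sporadic low-dimensional cases — run Lemma \ref{lemzero} a second time and appeal directly to parts (v.1), (v.2), (vi) and Theorem \ref{prodottiP1vecchi}. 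Keeping the number of $J^{(2)}$'s and of double points in each of $W$ and $T$ within the allowed ranges and with the correct parities, while making the dimension counts add up on the nose, is where essentially all of the labour lies.
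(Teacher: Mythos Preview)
Your specialization in the inductive step is the source of a genuine gap. By placing \emph{all} of the $J^{(2)}_{H_i}$'s and \emph{all} of the double points $2R_j$ on the hyperplane $\Pi$, the trace scheme $T$ in $\Pi\cong\PP^{m-1}$ carries $x$ schemes of type $J^{(2)}$ and $y$ double points. For the inductive hypothesis (part (vii) in $\PP^{m-1}$) to apply you need $y\le\lfloor(2^{m-1}-2(m-1)x)/m\rfloor$, and for the values of $y$ you have reduced to this fails badly: with $y$ near $(2^m-2mx)/(m+1)$ one computes $2^{m-1}-2(m-1)x-my = \bigl(2x-(m-1)2^{m-1}\bigr)/(m+1)<0$. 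So the bound $\dim(I_T)_{m-1}\le 2^{m-1}-2(m-1)x-my$ you want is asking for a negative dimension. Even granting $\dim(I_T)_{m-1}=0$, the resulting estimate $\dim(I_W)_{m-1}+0=2^{m-1}-2x-y$ exceeds the target $2^m-2mx-(m+1)y$ exactly when $my>2^{m-1}-2(m-1)x$, which is the regime you are in. This is not an edge effect at ``extremal $x$'': it already occurs for $x=2$ (for instance $m=10$, $x=2$, $y=88$), so the fixes you propose in the last paragraph --- redistributing the $J^{(2)}$'s or adding a few double points --- cannot repair an imbalance in the $2R_j$'s of this magnitude.

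The paper's remedy is to split the specialization symmetrically: put only $H_1,\ldots,H_{x/2}$ and $R_1,\ldots,R_{y/2}$ on $\Pi$, leave the other half generic, and add to the scheme the lines $Q_1P_i$ and $Q_1R_j$, which are fixed components in degree $m$. After applying Lemma~\ref{lemzero} the schemes $W$ and $T$ become isomorphic, each carrying $x/2$ configurations of type $J^{(2)}$, $x/2$ of type $J^{(1)}$, $y/2$ double points and $y/2$ simple points; it is this halving that brings the parameters back into the range where (vii) in $\PP^{m-1}$ applies. The parity and floor issues you anticipate are real, but they arise for this balanced split and are handled by a four-case analysis on the parities of $x/2$ and $y/2$, together with the sporadic inputs (v.1), (v.2), (vi).
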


\begin{proof}
{\rm (i) }  Follows immediately from Theorem \ref{prodottiP1vecchi}.
\par

{\rm (ii) }  By {\rm (i) } we have to prove that a scheme of type
$J^{(2)}$ imposes $2m$ conditions to the forms of
$(I_{(m-1)Q_1+\cdots + (m-1)Q_m})_m $.
\par
Consider the curves in $H_1 \simeq \PP^2$  of degree $m$ , passing
through
$$Tr_{H_1} ((m-1)Q_1+ (m-1)Q_2+2P_1+2P_2).$$
Since the line $Q_1Q_2$ is a fixed component of multiplicity $m-2$,
we have
$$\dim (I_{Tr_{H_1} ((m-1)Q_1+ (m-1)Q_2)})_m =
\dim (I_{ (Q_1+ Q_2)})_2 =6-2=4$$ and
$$\dim (I_{Tr_{H_1} ((m-1)Q_1+ (m-1)Q_2+2P_1+2P_2)})_m = 0.$$

Hence the two double points $Tr_{H_1}2P_1$ and $Tr_{H_1}2P_2$ lying
on $H_1$ give  at most 4 conditions, instead of 6,  to the forms of
$ (I_{Tr_{H_1} ((m-1)Q_1+ (m-1)Q_2)})_m$. It follows that the two double points
 $2P_1$ and $2P_2$ of $\PP^m$, give at most
$2(m+1)-2=2m$ conditions to the forms of $(I_{(m-1)Q_1+ (m-1)Q_2})_m
$, and so  to the forms of $(I_{(m-1)Q_1+\dots + (m-1)Q_m})_m $
also. In other words,  $\dim(I_{X})_m\geq 2^m-2m.$
\par
Now we will prove that $\dim(I_{X})_m\leq 2^m-2m$ by induction on
$m$. For $m=3$ we have $X= 2Q_1+2Q_2+ 2Q_3 +J^{(2)}_{H_1},$ and
since the plane $H_1$ is a fixed component for the surfaces defined
by the forms of  $(I_X)_3$, we easily get  $\dim (I_X)_3 =  2^3-6$.

 Assume $m>3$.   Let $\Pi  \subset  \PP ^{m}$ be the hyperplane through $H_1, Q_2,... ,Q_m$ (note that $Q_1 \notin \Pi$).
 Let $W $ be the projection of  $Res_\Pi X$ into $\Pi$  from $Q_1$, hence
  $$W = (m-2)Q_2+\cdots + (m-2)Q_m+R_1+R_2  \subset \Pi \simeq \PP^{m-1}$$
  where $<Q_2,Q_3,R_1,R_2>$ is the plane $H_1$.
  Since by Theorem \ref{prodottiP1vecchi} one double point imposes independent conditions to the forms of  $(I_{(m-2)Q_2+\cdots + (m-2)Q_m})_{m-1}$, by Lemma \ref{duepuntinixpiano} we get
  $$\dim (I_W)_{m-1} = 2^{m-1} - 2. $$
 Let $\Pi' =  <Q_2,... ,Q_n> \simeq \PP^{n-2}$, and  let $T= Res_ {\Pi '} (Tr _{\Pi} X )$, so
 $$T = (m-2)Q_2+\cdots + (m-2)Q_m+ Tr _{\Pi} J^{(2)}_{H_1} \subset \Pi \simeq \PP^{m-1} .$$ 

 \par
 By the induction hypothesis we have
  $$\dim (I_T)_{m-1} = 2^{m-1} - 2(m-1). $$
Thus by Lemma  \ref{lemzero} , we get
$$\dim (I_{X})_m \leq 2^{m-1} - 2+2^{m-1} - 2(m-1)=2^m-2m,$$
and we are done.
\par

{\rm (iii) } and {\rm (iv) } easily follow from (i) and (ii).
\par

{\rm (v.1) } Note that the line $L$ $=Q_1R_1$ is a fixed component
for the hypersurfaces defined by the forms of $(I_{X})_4$, hence
$\dim(I_{X})_4=\dim(I_{Z})_4 $, where $Z=X+L$.
 Let $\Pi  \subset  \PP ^{4}$ be the hyperplane through $P_1,P_2, Q_2,Q_3,Q_4$ (note that it doesn't contain $Q_1$) and let $R_1'= L \cap \Pi$. We want to apply Lemma \ref{lemzero} to $Z$ and $\Pi$.
\par
 The projection of  $Res_\Pi Z$ into $\Pi$  from $Q_1$ is
 $$W=2Q_2+2Q_3 + 2Q_4 +P_1+P_2+ 2R'_1 \subset \Pi  \simeq \PP^3$$
 If $\Pi' $ is the plane $ <Q_2,Q_3,Q_4>$, we have
 $$T= Res_ {\Pi '} (Tr _{\Pi} Z )=2Q_2+2Q_3 + 2Q_4 +J^{(2)}_{H_1}+ R'_1 \subset \Pi  \simeq \PP^3.$$
 \par
 Since 5 double  points impose independent conditions to the surfaces of degree $3$
 in $\PP^3$,  we have (by Lemma \ref{duepuntinixpiano}) that 
  $$\dim (I_W)_3=
  \dim (I_{2Q_2+2Q_3 + 2Q_4 + 2R'_1})_3-2 = 2.$$
 Since $ R'_1$ is a generic point in $ \Pi $, by (ii) we get
   $$\dim (I_T)_3=2^3-2 \cdot3 -1=1.$$
Hence by  Lemma \ref{lemzero} we have
$$\dim(I_{X})_4=\dim(I_{Z})_4 \leq  \dim (I_W)_3+\dim (I_T)_3=3$$
and the conclusion follows from (iv).
\par

{\rm (v.2) } We proved this case by an ad-hoc specialization of
$X'$ (that we leave to the reader) and by direct computations using
CoCoA  (see \cite{cocoa}).

{\rm (vi) } By (iv) it suffices to prove that $\dim (I_{X})_5 \leq
4$.
\par
 Observe that the lines $L_1$ $=Q_1R_1$ and  $L_2$ $=Q_1R_2$ are a fixed component for the hypersurfaces defined by the forms of $(I_{X})_5$, hence $\dim(I_{X})_5=\dim(I_{X+L_1+L_2})_5 $.

Let $\Pi  \subset  \PP ^{5}$ be the hyperplane  $<P_1,P_2,
Q_2,Q_3,Q_4,Q_5>$,  that does not contain $Q_1$, and let $R_1'= L_1
\cap \Pi$ and $R_2'= L_2 \cap \Pi$.
\par
Let  $Z$ be the scheme obtained from $X+L_1+L_2$ by  specializing
the point $R_3$ on $\Pi$. If we prove that $\dim(I_{Z})_5 \leq 4$,
then (by the semicontinuity of the Hilbert function) we have $\dim(I_{X})_5 \leq 4$, and we are done.
In order to show that   $\dim(I_{Z})_5 \leq 4$, recall that, by
Lemma \ref{lemzero},
  $$\dim (I_{Z})_5 \leq   \dim (I_{W, \Pi})_{4} + \dim (I_{T, \Pi})_{4} ,$$
 where
 $$W= 3Q_2+3Q_3+3Q_4+3Q_5+P_1+P_2+2R'_1+2R'_2+R_3 $$
$$ = 3Q_2+3Q_3+3Q_4+3Q_5+J^{(1)}_{H_1}+2R'_1+2R'_2+R_3 \subset \Pi \simeq \PP^4$$
 and
 $$T=3Q_2+3Q_3+3Q_4+3Q_5+2P_1+2P_2+R'_1+R'_2+2R_3  $$
$$=3Q_2+3Q_3+3Q_4+3Q_5+J^{(2)}_{H_1}+R'_1+R'_2+2R_3   \subset \Pi \simeq \PP^4.$$
By (v.1), since $Q_2, Q_3, P_1,P_2$  lye on $H_1$ and $R'_1, R'_2$
are simple generic points, we have
$$\dim  (I_{T})_{4} = 1.$$
\par
Unfortunately, since a double point does not impose independent
conditions to the hypersurfaces defined by the forms of
$(I_{3Q_2+3Q_3+3Q_4+3Q_5+2R'_1+2R'_2})_4$, we can not use Lemma
\ref{duepuntinixpiano} to compute $\dim  (I_{W})_{4} $. So denote by
$\Lambda \simeq \PP^3 $ the linear span of $Q_2, Q_3, Q_4, P_1,P_2$
and specialize $R'_1$ and $R_3$ on it. The line $L=Q_5R_2'$ is a
fixed component for the  hypersurfaces defined by the forms of $
(I_{W})_{4} $, hence $\dim  (I_{W})_{4} =\dim  (I_{W+L})_{4} $. Now
$$Tr_{\Lambda} (W+L) = 3Q_2+3Q_3+3Q_4+J^{(1)}_{H_1}+2R'_1+R''_2+R_3 , $$
where $R''_2 = \Lambda \cap L$.  Hence, since the plane
$<Q_2,Q_3,Q_4>$ is a fixed component, we get (by Lemma
\ref{duepuntinixpiano}) that
$$\dim (I_{Tr_{\Lambda} (W+L)})_4 =
\dim (I_{2Q_2+2Q_3+2Q_4+J^{(1)}_{H_1}+2R'_1+R''_2+R_3})_3 =0 .
$$
It follows that  $\dim (I_{W})_{4} = \dim (I_{Res _\Lambda W})_{3}$,
where
$$Res _\Lambda W = 2Q_2+2Q_3+2Q_4+3Q_5+R'_1+2R'_2 .$$
Thus  the forms of  $(I_{Res _\Lambda W})_{3}$ define cones with
$Q_5$ as vertex, and so we easily get $\dim (I_{W})_{4} = \dim
(I_{Res _\Lambda W})_{3}=3.$
\par
Finally, by  Lemma \ref{lemzero} we have
 $$\dim (I_{Z})_5 \leq   \dim (I_{W})_{4} + \dim (I_{T})_{4}=4 .$$

 \par
{\rm (vii) } Theorem \ref{prodottiP1vecchi} covers the case $x=0$,
so assume that $x \geq2$.  By (iv) it suffices to prove that $\dim
(I_{X})_m  \leq 2^m -2mx - (m+1)y $.
\par

Let $\Pi  \subset  \PP ^{m}$ be a hyperplane through $Q_2, \dots,
Q_m$, not containing $Q_1$. Let $\tilde X$ be the scheme obtained by
specializing, onto $\Pi$, the planes $H_1, \dots, H_{x\over 2} $ and
the points $R_1, \dots, R_{y\over 2} $.

Since the lines $L_i= Q_1P_i$ ($1 \leq i \leq 2x$) and $M_j =
Q_1R_{j}$ ($1 \leq j \leq y$)
 are  fixed component for the hypersurfaces defined by the forms of $(I_{ \tilde X})_m$,
we have  $$\dim (I_{ \tilde X})_m= \dim (I_{\tilde X+L_1+ \dots
+L_{2x}+M_1+ \dots +M_{y}})_m .$$ Let
$$Z = \tilde X+L_1+ \dots +L_{2x}+M_1+ \dots +M_{y}.$$
Since by semicontinuity $\dim (I_{X})_m \leq \dim (I_{\tilde X})_m$,
if we prove that
$$\dim (I_{Z})_m \leq   2^m -2mx - (m+1)y $$
we are done.

Set $P'_{i}= L_i \cap \Pi$ and  $R'_{j}= M_j \cap \Pi$. Note  that
  $P'_{i}=P_i$ for $i=1,\dots,x$ and  $R'_{j}=R_j$ for $j=1,\dots,{y \over 2}$,
  and recall  that
$$<Q_{2i},Q_{2i+1},P_{2i-1},P_{2i}>=H_i \simeq \PP^2 \ \ \ \ \ \ {\rm for} \ \ 1 \leq i \leq {x\over2}$$
$$<Q_{2i},Q_{2i+1},P'_{2i-1},P'_{2i}>=H'_{i} \simeq \PP^2 \ \ \ \ \ \ {\rm for} \ \ {{x\over2}+1} \leq i \leq {x}$$
where $H'_{i}$ is the projection of  $H_i $ into $\Pi$  from $Q_1$.
\par
By Lemma \ref{lemzero} we have
$$\dim (I_{Z,\PP^m})_m \leq   \dim (I_{W, \Pi})_{m-1} + \dim (I_{T, \Pi})_{m-1} ,$$
where
$$W = (m-2)Q_2+\cdots + (m-2)Q_m +
P_1+\dots+P_x+2P'_{x+1}+\dots+2P'_{2x} $$
$$+R_{1}+\cdots +R_{y \over 2}
+2R'_{{y \over 2}+1}+\cdots +2R'_{y }$$
$$ = (m-2)Q_2+\cdots + (m-2)Q_m + J^{(1)}_{H_1}+\dots+J^{(1)}_{H_{x \over 2}}
+ J^{(2)}_{H'_{{x \over 2}+1}}+\dots+J^{(2)}_{H'_{x}} $$
$$+R_{1}+\cdots +R_{y \over 2}
+2R'_{{y \over 2}+1}+\cdots +2R'_{y };$$

$$T = (m-2)Q_2+\cdots + (m-2)Q_m +
2P_1+\dots+2P_x+P'_{x+1}+\dots+P'_{2x} $$
$$+2R_{1}+\cdots +2R_{y \over 2}
+R'_{{y \over 2}+1}+\cdots +R'_{y }$$
$$ = (m-2)Q_2+\cdots + (m-2)Q_m +
 J^{(2)}_{H_1}+\dots+J^{(2)}_{H_{x \over 2}}+J^{(1)}_{H'_{{x \over 2}+1}}+\dots+J^{(1)}_{H'_{x}} $$
$$+2R_{1}+\cdots +2R_{y \over 2}
+R'_{{y \over 2}+1}+\cdots +R'_{y }.$$

Since clearly  $W \simeq T$, it suffices to compute $ \dim
(I_{W})_{m-1}$.

We work by induction on $m$. The first case  is $m=5$, $x=y=2$. We
want to prove that
$$\dim (I_{Z})_m \leq   2^m -2mx - (m+1)y = 32-20-12=0$$

In this case we have
$$W = 3Q_2+3Q_3+3Q_4+3Q_5 +P_1+P_2+2P'_{3}+2P'_{4} +R_{1}+2R'_{2}$$
$$ = 3Q_2+3Q_3+3Q_4+3Q_5 + J^{(1)}_{H_1}+J^{(2)}_{H'_{2}}+R_{1}+2R'_{2}\subset \Pi \simeq \PP^4. $$
Since $R_1$ is a generic simple point, by (v.2) we get $ \dim
(I_{W})_{4} =0$, and the conclusion follows from Lemma
\ref{lemzero}.

Assume $m>5$.  Set
$$\Theta = (m-2)Q_2+\cdots + (m-2)Q_m,$$
and let $Y$ be the scheme obtained from $W$ by cutting off the
simple points  $R_{1}+\cdots +R_{y \over 2}$, that is

$$Y = W -(R_{1}+\cdots +R_{y \over 2})$$
$$ =\Theta+ J^{(1)}_{H_1}+\dots+J^{(1)}_{H_{x \over 2}}
+ J^{(2)}_{H'_{{x \over 2}+1}}+\dots+J^{(2)}_{H'_{x}} +2R'_{{y \over
2}+1}+\cdots +2R'_{y }.$$ Let $S_{1}\in H_1,\dots,S_{x \over 2}\in
H_{x \over 2}$ be generic points. Now we define some other schemes
related to $Y$, whose dimensions are computable using the induction 
hypothesis, mostly by substituting the two points of $J^{(1)}_{H_i}$
with a double point $2S_i$.
\par
We consider four cases:

\begin{enumerate}
 \item{\it ${x\over2}$ even; ${y\over2}$ even}.
Let
\par
$Y_{(1)} =  \Theta
 + J^{(2)}_{H'_{{x \over 2}+1}}+\dots+J^{(2)}_{H'_{x}}
  +2S_{1}+\cdots +2S_{x \over 2}
+2R'_{{y \over 2}+1}+\cdots +2R'_{y }.$

 \noindent
Hence in $Y_{(1)}$ we have $x' ={{x} \over2}$ schemes of type
$J^{(2)}$ and $y' ={{x+y} \over 2}$ double points.

\item  {\it ${x\over2}$ even ;  ${y\over2}$ odd .}
 Let
 \par
$Y_{(2)} =  \Theta + J^{(2)}_{H'_{{x \over
2}+1}}+\dots+J^{(2)}_{H'_{x}} +2S_{1}+\cdots +2S_{x \over 2}
+2R'_{{y \over 2}+1}+\cdots +2R'_{y } +2P,$
\par\noindent
where $2P$ is an extra generic double point.  Hence in $Y_{(2)}$ we
have $x' ={{x} \over2}$ schemes of type $J^{(2)}$ and $y'  =({{x+y}
\over 2}+1)$ double points.

\item  {\it ${x\over2}$ odd ;  ${y\over2}$ even. }
 Let  \par
$Y_{(3)} =  \Theta+J^{(2)}_{H_{x \over 2}}+ J^{(2)}_{H'_{{x \over
2}+1}}+\dots+J^{(2)}_{H'_{x}} +2S_{1}+\cdots +2S_{{x \over 2}-1}
+2R'_{{y \over 2}+1}+\cdots +2R'_{y }.$
\par\noindent
Note that  $J^{(1)}_{H_{x \over 2}} \in Y$ has been replaced by
$J^{(2)}_{H_{x \over 2}}$.  Hence in $Y_{(3)}$ we have $x' ={{x}
\over2}+1$ schemes of type $J^{(2)}$ and $y'  =({{x+y} \over 2}-1)$
double points.

\item  {\it ${x\over2}$ odd ;  ${y\over2}$ odd. }
Let \par \noindent $Y_{(4)} =  \Theta +J^{(2)}_{H_{x \over 2}}+
J^{(2)}_{H'_{{x \over 2}+1}}+\dots+J^{(2)}_{H'_{x}} +2S_{1}+\cdots
+2S_{{x \over 2}-1} +2R'_{{y \over 2}+1}+ \cdots +2R'_{y }+2P,$
\par\noindent
where $2P$ is an extra generic double point. Note that
$J^{(1)}_{H_{x \over 2}} \in Y$ has been replaced by
$J^{(2)}_{H_{x \over 2}}$, hence in $Y_{(4)}$ we have $x' ={{x}
\over2}+1$ schemes of type $J^{(2)}$ and $y'  ={{x+y} \over 2}$ double
points.
\end{enumerate}

Observe that in all the four cases above $x' $ and $y' $ are even. Now
we need to verify that if $x$, $y$ are even, $0 \leq x \leq
\lfloor{{m-1} \over 2} \rfloor$ and
 $0 \leq y \leq \lfloor{{2^{m} -2mx }\over {m+1}} \rfloor$, then
\par
(a) $0 \leq x'  \leq \lfloor{{m-2} \over 2} \rfloor$;
\par
(b) $0 \leq y' \leq \lfloor{{2^{m-1} -2(m-1) x' }\over {m}} \rfloor$.

The following table may help:

$$
\begin{matrix}
  x \over 2  & & y \over 2  & &   x'   &   y'    \\
& & & \\
  even & & even &  &  {{x} \over2}       &  {{x+y} \over 2}  \\
  & & & \\
  even & & odd   & &  {{x} \over2}       &  {{x+y+2} \over 2}   \\
  & & & \\
  odd   & & even & &  {{x+2} \over2}    &  {{x+y-2} \over 2}    \\
  & & & \\
  odd   & & odd   & & {{x+2} \over2}     &  {{x+y} \over 2}   \\
  \end{matrix}
$$

\par
(a)  $ x' \geq 0$ is obvious. For $m=6$ we have $x = 2$, hence  $x'  =
{x\over 2}+1=2 \leq  \lfloor{{m-2} \over 2} \rfloor =2.$
\par
Assume $m \geq 7$. Since $ x'  \leq {x\over 2}+1$, it is enough to
prove that ${x\over 2}+1\leq \lfloor{{m-2} \over 2} \rfloor$. Now
${x\over 2}+1\leq \lfloor{{m-2} \over 2} \rfloor \iff {x\over
2}+1\leq {{m-2} \over 2} \iff  m-4-x \geq 0.$
\par
Since  $x \leq \lfloor{{m-1} \over 2} \rfloor$ implies  $2x \leq
m-1$, we get
$$ m-4-x \geq m-4-{ {m-1 }\over 2 }={ {m-7 }\over 2 } \geq 0 ,$$
and (a) is verified.
\par
(b) Obviously $y'  \geq 0$.\par {\it Cases} (1) {\it  and} (2) : {\it
${x\over2}$  even}. \par In these case $x' = {x \over 2 } $, $y'  =
{{x +y}\over 2 } $ for ${y\over2}$  even, and $y'  = {{x +y}\over 2
}+1 $  for ${y\over2}$ odd.

Since ${x \over 2 } \neq 0$ is even, we have $ \lfloor{{m-1} \over
2} \rfloor \geq x \geq 4$, hence $m \geq 9$.

\par
For $m=9$ we have $x=4$,  $x' =2$, $y \leq  \lfloor{{2^{m} -2mx
}\over {m+1}} \rfloor = 44$ , $\left \lfloor{{2^{m-1} -2(m-1)x'
}\over {m}}\right  \rfloor =24$ and it is easy to see che $y' \leq
24$.
\par
Assume $m \geq 10$. It suffices to show that
$${{x +y}\over 2 }+1 \leq \left \lfloor{{2^{m-1} -2(m-1)x'  }\over {m}}\right  \rfloor.$$

Now
$${{x +y}\over 2 }+1 \leq \left \lfloor{{2^{m-1} -2(m-1)x' }\over {m}}\right  \rfloor \iff
m(x+y+2) \leq  2^{m} -2(m-1)x   $$
$$ \iff  2^m-  x(3m-2) - my- 2m  \geq 0 .$$
Since $y \leq{{2^{m} -2mx }\over {m+1}} $, we get \par $2^m-
x(3m-2) - my- 2m  $ \par $\geq  2^m-  x(3m-2) - m{{2^{m} -2mx }\over
{m+1}}- 2m  $\par $={1\over {m+1}} (2^m(m+1)-  x(3m-2)(m+1) -
m(2^{m} -2mx ) -2m(m+1) ) $\par $={1\over {m+1}} (2^m-
x(m^2+m-2)-2m(m+1) ) $ \par $\geq  {1\over {m+1}} (2^m-  {{m-1}
\over 2} (m^2+m-2)-2m(m+1) ) $ \par $=  {1\over {2m+2}} (2^{m+1}-
m^3-4m^2-m-2 ).$ \par

Since for $m \geq 10$ it is easy to verify that  $(2^{m+1}-
m^3-4m^2-m-2 ) \geq 0$, we are done.

{\it Cases} (3) {\it  and} (4): {\it ${x\over2}$  odd}. \par In
these cases $x' = {x \over 2 } +1$, $y' = {{x +y}\over 2 } -1$ for
${y\over2}$  even, and $y' = {{x +y}\over 2 } $  for ${y\over2}$
odd.

\par
For $m=6, 7, 8, 9$ we have $x=2$,  $x' =1$, and

$$
\begin{matrix}
 m  & &  \lfloor{{2^{m} -2mx }\over {m+1}} \rfloor   & &
 \left \lfloor{{2^{m-1} -2(m-1)x' }\over {m}}\right  \rfloor     \\
& & & \\
6 & &  5 &  & 3        \\
  & & & \\
7 & & 12   & &  7       \\
  & & & \\
8  & & 24 & &  14     \\
  & & & \\
9   & & 47   & &26      \\
  \end{matrix}
$$
 and it is easy to verify that $y' \leq 2, 6, 12, 24,$ respectively.
\par
Assume $m \geq 10$. It suffices to show that
$${{x +y}\over 2 } \leq \left \lfloor{{2^{m-1} -2(m-1)x' }\over {m}}\right  \rfloor.$$
Now
$${{x +y}\over 2 } \leq \left \lfloor{{2^{m-1} -2(m-1)x' }\over {m}}\right  \rfloor \iff
m(x+y) \leq  2^{m} -2(m-1)(x+2)   $$
$$ \iff  2^m-  (3m-2) x - my- 4m +4 \geq 0 .$$
Since $y \leq{{2^{m} -2mx }\over {m+1}} $, we get \par $ 2^m-
(3m-2) x - my- 4m +4  $ \par $\geq  2^m-  (3m-2) x - m {{2^{m} -2mx
}\over {m+1}} - 4m +4 $\par

$={1\over {m+1}} (2^m(m+1)-  (3m-2)(m+1)x - m(2^{m} -2mx ) - 4m(m+1)
+4(m+1) ) $\par $={1\over {m+1}} (2^m-  x(m^2+m-2)-4m^2+4 ) $ \par
$\geq  {1\over {m+1}} (2^m-  {{m-1} \over 2} (m^2+m-2)-4m^2+4 ) $
\par $=  {1\over {2m+2}} (2^{m+1}- (m-1)(m^2+9m+6)).$ \par

Since for $m \geq 10$ it is easy to verify that  $(2^{m+1}-
(m-1)(m^2+9m+6)) \geq 0$, we are done.

 Since the two conditions, (a) and (b), are verified, we may compute the dimension of
$(I_{Y_{(i)}})_{m-1}$ using the induction hypothesis . In particular we wish to remark that
 the schemes of type $J^{(2)}$ and the double points of each $Y_{(i)}$  impose the expected number of conditions to $(I_\Theta)_{m-1}$.
Using Lemma \ref{duepuntinixpiano}  so we obtain the dimension of
$(I_Y)_{m-1}$  in each of the four cases above, that is:
$$\dim(I_Y)_{m-1}= \dim(I_\Theta)_{m-1} - 2 \left ({x\over 2} \right )
- 2(m-1) \left ({x\over 2} \right ) -m \left({y\over 2} \right ) $$
$$ =2^{m-1}  - mx-m\left({y\over 2} \right ) .
$$
Since $W$ is simply the union of $Y$ and ${y\over 2}$ generic simple points,
and since $2^{m-1}  - mx-m \left({y\over 2} \right ) \geq
\left({y\over 2} \right ) $ as $ y \leq \lfloor{{2^{m} -2mx }\over
{m+1}} \rfloor$, we have
$$\dim(I_W)_{m-1}= \dim(I_Y)_{m-1} - \left({y\over 2} \right )=
2^{m-1}  - mx-m\left({y\over 2} \right ) - \left({y\over 2} \right ).
$$

It now follows that 
$$\dim (I_{Z})_m
\leq   \dim (I_{W})_{m-1} + \dim (I_{T})_{m-1} = 2 \dim
(I_{W})_{m-1} $$
$$= 2^{m}  - 2mx-(m+1) y.
$$

  \end{proof}

\begin{lem}\label{lemmaxtraccia}\ (Trace Lemma)

Let  $x,y,m \in \Bbb N$,   $m \geq 4$, $0 \leq x \leq \lfloor{{m-1}
\over 2} \rfloor$. Let $Q_1,... ,Q_m$ and $R_{1},...,R_{y}$ be
generic points in $\PP^m$ and let $H_i \cong \PP^2$ be  a generic plane
through $Q_{2i}$, $Q_{2i+1}$ $(1 \leq i \leq x)$. Let  $X  \subseteq
\PP ^m$ be the following scheme
$$X = (m-1)Q_1+\cdots + (m-1)Q_m +H_1+ \cdots + H_x + 2R_1+\cdots +2R_y.$$

{\rm (i) } If $x=1$ and $y=0 $, that is
$$X = (m-1)Q_1+\cdots + (m-1)Q_m +{H_1}$$ we have
$$\dim (I_X)_m = 2^m - 4.$$

{\rm (ii) } If  $x \leq x'$, $y \leq y' $, and
$$\dim (I_{X'})_m = 2^m - 4x'-(m+1)y',$$
  then
 $$\dim (I_{X})_m = 2^m - 4x-(m+1)y,$$
 where
$$X'= (m-1)Q_1+\cdots + (m-1)Q_m +{H_1}+ \cdots + {H_{x'}} + 2R_1+\cdots +2R_{y'}\subseteq \PP ^m . $$

{\rm (iii) } $$\dim (I_X)_m \geq 2^m - 4x-(m+1)y.$$

{\rm (iv) } If  $m =4$, $x=1$, $y=2 $, that is
 $$X=3Q_1+\cdots + 3Q_4 +{H_1}+ 2R_1+2R_2, $$
 then
$$\dim (I_X)_4 = 2^m - 4-(m+1)y=2 .$$

{\rm (v) } If  $m =5$, $x=2$, $y=4 $,
 that is
 $$X=4Q_1+\cdots + 4Q_5 +{H_1}+H_2+ 2R_1+\cdots + 2R_4 ,$$
 then
$$\dim (I_X)_5 =  2^m - 4x-(m+1)y= 0 . $$

{\rm (vi) } If   $x$, $y $ are even,  $0 \leq y \leq \lfloor{{2^{m}
-4x }\over {m+1}} \rfloor$,
  then
$$\dim (I_X)_m = 2^m - 4x-(m+1)y. $$

\end{lem}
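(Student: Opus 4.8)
The strategy is to mimic, step by step, the proof of the Residue Lemma (Lemma~\ref{lemmaxresiduo}): here the reduced planes $H_i$ play the role that the schemes $J^{(2)}_{H_i}$ played there, and the arithmetic is in fact easier, because a plane contained in a hyperplane $\Pi$ has empty residual and is its own trace on $\Pi$, so no fat point ever has to be ``split''. I would first dispatch (iii). Restricting a form of $(I_{(m-1)Q_1+\cdots+(m-1)Q_m})_m$ to $H_i$ produces a form of $(I_{\mathrm{Tr}_{H_i}((m-1)Q_{2i}+(m-1)Q_{2i+1})})_m$ on $H_i\cong\PP^2$, and by Lemma~\ref{lemfixcomp} the line $\langle Q_{2i},Q_{2i+1}\rangle$ is a fixed component of multiplicity $m-2$ for that system, so the image has dimension at most $\dim (I_{Q_{2i}+Q_{2i+1}})_2=4$. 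Hence each $H_i$ kills at most $4$ dimensions and each $2R_j$ at most $m+1$; combined with $\dim (I_{(m-1)Q_1+\cdots+(m-1)Q_m})_m=2^m$ (Lemma~\ref{lemmaxresiduo}(i)) this gives (iii).

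The heart of the matter is (i). Its lower bound is (iii) with $x=1,\ y=0$. For the upper bound I would induct on $m\ge 4$, the case $m=4$ being settled by direct machine computation (as for Lemma~\ref{lemmaxresiduo}(v.2)). For the step, choose the hyperplane $\Pi=\langle Q_2,\dots,Q_m,H_1\rangle$, which does not contain $Q_1$, and apply Lemma~\ref{lemzero}. Since $H_1\subset\Pi$, one has $\mathrm{Res}_\Pi X=(m-1)Q_1+(m-2)Q_2+\cdots+(m-2)Q_m$, so $W=(m-2)Q_2+\cdots+(m-2)Q_m$ and $\dim(I_W)_{m-1}=2^{m-1}$ by Theorem~\ref{prodottiP1vecchi}(i); and $\mathrm{Tr}_\Pi X=(m-1)Q_2+\cdots+(m-1)Q_m+H_1$, for which $\Pi'=\langle Q_2,\dots,Q_m\rangle$ is a fixed component of multiplicity $1$ (Lemma~\ref{lemfixcomp}, the extra constraint from $H_1$ not affecting this since $H_1\cap\Pi'$ is just a line), so $T=(m-2)Q_2+\cdots+(m-2)Q_m+H_1\subset\Pi\cong\PP^{m-1}$ and $\dim(I_T)_{m-1}=2^{m-1}-4$ by the induction hypothesis. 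Adding the two, $\dim(I_X)_m\le 2^m-4$.

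Parts (ii) and the two exact small cases (iv) ($m=4$) and (v) ($m=5$) then follow by the familiar semicontinuity/Horace manipulations from (i), (iii), Theorem~\ref{prodottiP1vecchi}, the Residue Lemma, and, where unavoidable, a direct computation. Finally (vi), by induction on $m$ with base cases $m=4,5$ supplied by the above: by (iii) it suffices to bound $\dim(I_X)_m$ from above. With $x,y$ even and $y\le\lfloor(2^m-4x)/(m+1)\rfloor$, pick a hyperplane $\Pi$ through $Q_2,\dots,Q_m$ avoiding $Q_1$, specialize a suitable half of the $H_i$ and of the $R_j$ onto $\Pi$, note that every line $Q_1R_j$ is then a fixed component and adjoin it, and apply Lemma~\ref{lemzero}. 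Both $W$ and $T$ become schemes of ``Trace type'' in $\Pi\cong\PP^{m-1}$ --- the fat points $(m-2)Q_2,\dots,(m-2)Q_m$, about $x/2$ generic planes each through two of the $Q_i$, about $y/2$ generic double points --- together with about $y/2$ generic simple points, and one checks $W\simeq T$; so it is enough to evaluate one of them, which the induction hypothesis yields (after absorbing the extra simple points by a dimension count and making the occasional parity correction à la Lemma~\ref{duepuntinixpiano}) as $2^{m-1}-2x-(m+1)(y/2)$. Doubling gives $\dim(I_X)_m\le 2^m-4x-(m+1)y$, as wanted.

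The hard part is not the geometry but the bookkeeping in (vi): one must choose \emph{how many} of the $H_i$ and $R_j$ to specialize so that, in each parity case, the numbers $x'$ of planes and $y'$ of double points occurring in $W$ (and $T$) are both even and satisfy the range conditions $x'\le\lfloor(m-2)/2\rfloor$ and $y'\le\lfloor(2^{m-1}-4x')/m\rfloor$ needed to run the induction in $\PP^{m-1}$ --- the same delicate case analysis, with the same sort of numerical inequalities, that occupies part (vii) of Lemma~\ref{lemmaxresiduo} --- and one must separately clear by machine the finitely many small triples $(m,x,y)$ that lie outside the reach of that induction. The geometric input (Castelnuovo via Lemma~\ref{lemzero}, the fixed-component computation of Lemma~\ref{lemfixcomp}, and Theorem~\ref{prodottiP1vecchi}) is exactly as above and causes no trouble.
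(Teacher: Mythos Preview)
Your outline is essentially the paper's own proof: same Castelnuovo/Lemma~\ref{lemzero} cut, same role for Lemma~\ref{lemfixcomp}, same ``specialize half the $H_i$ and half the $R_j$, add the fixed lines $Q_1R_j$, observe $W\simeq T$'' induction for~(vi), and the same parity patch (add one extra plane and/or one extra double point, then invoke~(ii)).

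Two small points where the paper is sharper than your sketch. First, in~(i) the paper does not start the induction at $m=4$ by machine: it anchors the induction at $m=3$, where $X=2Q_1+2Q_2+2Q_3+H_1\subset\PP^3$ and $H_1$ is visibly a fixed component of $(I_X)_3$, so $\dim(I_X)_3=\dim(I_{2Q_1+2Q_2+2Q_3})_2=2^3-4$ with no computation at all. Second, (iv) and (v) are not dispatched by machine either: for~(iv) one adds the fixed lines $Q_1R_1,\,Q_1R_2$, cuts with $\Pi=\langle Q_4,H_1\rangle$, and reduces to five double points in $\PP^3$ (giving $\dim(I_W)_3=0$) together with the $m=3$ instance of~(i) plus two simple points (giving $\dim(I_T)_3=2$); then~(v) is obtained from~(iv) by one more application of the same cut. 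Finally, Lemma~\ref{duepuntinixpiano} plays no role in the Trace Lemma (there are no $J^{(1)}$ or $J^{(2)}$ schemes here, only reduced planes); the parity adjustment in~(vi) is handled purely via~(ii) by enlarging $x$ or $y$ by one.
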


\begin{proof}
{\rm (i) }  By Lemma \ref{lemmaxresiduo}{\rm (i) } we have to prove
that a plane imposes $4$ conditions to the forms of
$(I_{(m-1)Q_1+\cdots + (m-1)Q_m})_m $.
\par
Consider the curves of degree $m$ in $H_1 \simeq \PP^2$ passing
through
$$Tr_{H_1} ((m-1)Q_1+ (m-1)Q_2).$$
Since the line $Q_1Q_2$ is a fixed component of multiplicity $m-2$,
we have
$$\dim (I_{Tr_{H_1} ((m-1)Q_1+ (m-1)Q_2)})_m =
\dim (I_{ (Q_1+ Q_2)})_2 =6-2=4  .$$ Hence a plane imposes at most
4 conditions  to the hypersurfaces defined by the forms of $
(I_{Tr_{H_1} ((m-1)Q_1+ (m-1)Q_2)})_m$, and so also to the hypersurfaces 
defined by the forms of $(I_{(m-1)Q_1+\dots + (m-1)Q_m})_m $. In
other words,  $\dim(I_{X})_m\geq 2^m-4.$
\par
Now we will prove, by induction on $m$, that $\dim(I_{X})_m\leq 2^m-4$. For $m=3$ we have $X= 2Q_1+2Q_2+ 2Q_3 +{H_1},$ and since the
plane $H_1$ is a fixed component for the surfaces defined by the
forms of  $(I_X)_3$, we easily get  $\dim (I_X)_3 =  2^3-4$.  

Assume $m>3$.   Let $\Pi  \subset  \PP ^{m}$ be the hyperplane through $H_1, Q_2,... ,Q_m$ (not containing $Q_1$).
 Let $W $ be the projection of  $Res_\Pi X$ into $\Pi$  from $Q_1$, hence
  $$W = (m-2)Q_2+\cdots + (m-2)Q_m \subset \Pi \simeq \PP^{m-1}.$$
  By Lemma \ref{lemmaxresiduo}{\rm (i) } we have
  $$\dim (I_W)_{m-1} = 2^{m-1} . $$
 Let $\Pi' =  <Q_2,... ,Q_m> \simeq \PP^{m-2}$, and  let $T= Res_ {\Pi '} (Tr _{\Pi} X )$, so
 $$T = (m-2)Q_2+\cdots + (m-2)Q_m+ {H_1} \subset \Pi \simeq \PP^{m-1} .$$
  \par
 By the induction hypothesis we have
  $$\dim (I_T)_{m-1} = 2^{m-1} - 4. $$
Thus by Lemma \ref{lemzero} , we get
$$\dim (I_{X})_m \leq 2^{m-1} +2^{m-1} - 4=2^m-4,$$
and we are done.
\par

{\rm (ii) } and {\rm (iii) } easily follow from (i) and Lemma
\ref{lemmaxresiduo}{\rm (i) }.
\par

{\rm (iv) } Since the lines $L_1$ $=Q_1R_1$ and $L_2$ $=Q_1R_2$ are
fixed components of the hypersurfaces defined by the forms of
$(I_{X})_4$, we have $\dim(I_{X})_4=\dim(I_{Z})_4 $, where
$Z=X+L_1+L_2$.   Let $\Pi  \subset  \PP ^{4}$ be the hyperplane
 $ <Q_4, H_1>=<Q_2,Q_3,Q_4, H_1>$ and observe that $Q_1 \not\in \Pi $.  
Let  $R_1'= L_1 \cap \Pi$ and  $R_2'= L_2 \cap \Pi$.
 Now we want to apply Lemma \ref{lemzero} to $Z$ and $\Pi$.

 The projection of  $Res_\Pi Z$ into $\Pi$  from $Q_1$ is
 $$W=2Q_2+2Q_3 + 2Q_4 +2R'_1+2R'_2 \subset \Pi  \simeq \PP^3$$
 If $\Pi' $ is the plane $ <Q_2,Q_3,Q_4>$, we have
 $$T= Res_ {\Pi '} (Tr _{\Pi} Z )=2Q_2+2Q_3 + 2Q_4 +{H_1}+ R'_1+R_2' \subset \Pi  \simeq \PP^3.$$
 \par
 Since 5 double  points impose independent conditions to the surfaces of degree $3$
 in $\PP^3$  we have
  $$\dim (I_W)_3=0.$$
 Since $ R'_1$ and  $ R'_2 $ are generic points in $ \Pi $, by (i) we get
   $$\dim (I_T)_3=2^3-4-2=2.$$
Hence by  Lemma \ref{lemzero} we have
$$\dim(I_{X})_4=\dim(I_{Z})_4 \leq  \dim (I_W)_3+\dim (I_T)_3=2$$
and the conclusion follows from (iii).
\par

%{\rm (iv.2) } We proved  this case both by an ad-hoc specialization of $X'$, that we left to the reader, and by direct computations, using CoCoa  (see \cite{cocoa}).

{\rm (v) } Let $\Pi  \subset  \PP ^{5}$ be the hyperplane  $<
Q_2,Q_3,Q_4,Q_5,H_1,>$,  that does not contain $Q_1$, Let  $\tilde X
$ be the scheme obtained from $X$ by  specializing the points $R_1$
and $R_2$ on $\Pi$.
  Observe that the lines $L_i$ $=Q_1R_i$ are fixed components for the hypersurfaces defined by the forms of $(I_{\tilde X})_5$, hence
  $\dim(I_{\tilde X})_5=\dim(I_{Z})_5 $, where
  $$Z= \tilde X+L_1+\dots+L_4.$$
 If we prove that $\dim(I_{Z})_5 =0$ then,
 by the semicontinuity of the Hilbert function, we will have $\dim(I_{X})_5=0$, and we will be done.
 \par
To that end, let $R_3'= L_3 \cap \Pi$,  $R_4'= L_4 \cap \Pi$, and let $H'_{2}$ be
the projection of  $H_2$ into $\Pi$  from $Q_1$. By   Lemma
\ref{lemzero}
  $$\dim (I_{Z})_5 \leq   \dim (I_{W, \Pi})_{4} + \dim (I_{T, \Pi})_{4} ,$$
 where
 $$W= 3Q_2+3Q_3+3Q_4+3Q_5+H'_2+R_1+R_2+2R'_3+2R'_4  \subset \Pi \simeq \PP^4$$
 and
 $$T=3Q_2+3Q_3+3Q_4+3Q_5+H_1+2R_1+2R_2+R'_3+R'_4    \subset \Pi \simeq \PP^4.$$
By (iv), since $R_1,R_2,R'_3,R'_4$  are simple generic points, we
have
$$\dim  (I_{W})_{4} = \dim  (I_{T})_{4} = 0,$$
from which $\dim(I_{Z})_5= 0$ follows.
 \par

{\rm (vi) } Theorem \ref{prodottiP1vecchi} covers the case $x=0$, so
assume that $x \geq2$.  By (iii) it suffices to prove that $\dim
(I_{X})_m  \leq 2^m -4x - (m+1)y $.
\par

Let $\Pi  \subset  \PP ^{m}$ be a hyperplane through $Q_2, \dots,
Q_m$, not containing $Q_1$. Let $\tilde X$ be the scheme obtained by
specializing, onto $\Pi$, the planes $H_1, \dots, H_{x\over 2} $ and
the points $R_1, \dots, R_{y\over 2} $.

Since the lines $L_i = Q_1R_{i}$ ($1 \leq i \leq y$)
 are  fixed components for the hypersurfaces defined by the forms of $(I_{ \tilde X})_m$,
we have  $$\dim (I_{ \tilde X})_m= \dim (I_{\tilde X+L_1+ \dots
+L_{y}})_m .$$ Let
$$Z = \tilde X+L_1+ \dots +L_{y}.$$
Since (by semicontinuity) $\dim (I_{X})_m \leq \dim (I_{\tilde X})_m$,
if we can prove that
$$\dim (I_{Z})_m \leq   2^m -4x - (m+1)y $$
we will be done.

We begin by setting $R'_{i}= L_i \cap \Pi$. Note  that
  $R'_{i}=R_i$ for $i=1,\dots,{y \over 2}$.
 Let $H'_{i}$ be the projection of  $H_i $ into $\Pi$  from $Q_1$.
By Lemma \ref{lemzero} we have
$$\dim (I_{Z,\PP^m})_m \leq   \dim (I_{W, \Pi})_{m-1} + \dim (I_{T, \Pi})_{m-1} ,$$
where
$$W = (m-2)Q_2+\cdots + (m-2)Q_m +
H'_{{x \over 2}+1}+ \cdots +H'_{x} +R_{1}+\cdots +R_{y \over 2}
+2R'_{{y \over 2}+1}+\cdots +2R'_{y }.$$
$$T = (m-2)Q_2+\cdots + (m-2)Q_m +
H_1+ \cdots +H_{x \over 2}+2R_{1}+\cdots +2R_{y \over 2} +R'_{{y
\over 2}+1}+\cdots +R'_{y } . $$

Clearly  $W \simeq T$, moreover $R'_{{y \over 2}+1}+\cdots +R'_{y }$
are generic simple points, hence it suffices to compute $ \dim
(I_{Y})_{m-1}$, where
$$Y= \Theta+ H_1+ \cdots +H_{x \over 2}+2R_{1}+\cdots +2R_{y \over 2} \subset \Pi \simeq \PP^{m-1}$$
and
$$\Theta = (m-2)Q_2+\cdots + (m-2)Q_m,$$
Now we are ready to prove (vi), which we do by induction on $m$ (with $x \geq 2$). The first
case to consider  is $m=5$, $x=2$, $y \leq 4$.  That case follows immediately from (ii) and (v).
\par
For $m=6$, by (ii) we need only consider the case $x=2$, $y=8$ for which we 
have to show that   $ \dim (I_{Z})_{6}=0$.  Now
$$Y = \Theta + H_1+2R_1+ \dots +2R_4\subset \Pi \simeq \PP^5$$
hence by (v) we get $  \dim (I_{Y})_{5} = 4$, so $  \dim
(I_{W})_{5}=\dim (I_{T})_{5} = 0$, and
 $  \dim (I_{Z})_{6}=0$ follows.

Now assume $m\geq 7$. We want to compute  $\dim (I_{Y})_{m-1} $ by induction. To this end, if ${y \over 2}$ is odd, add
a double point to $Y$, and if ${x \over 2}$  is odd also, add  a
generic plane through $Q_{m-1}, Q_m$ (this is possible since $x+1 <
m-1$). Hence we have to check that:

(a) $ {x \over 2}+1  \leq \lfloor{{m-2} \over 2} \rfloor$;
\par
(b) $  {y \over 2}+1  \leq \lfloor{{2^{m-1} -4x' }\over {m}}
\rfloor$, where $x' = {x \over 2}$ if $ {x \over 2}$ is even, and
$x' = {x \over 2}+1$ if $ {x \over 2}$ is odd. 

For (a) see the proof
of Lemma \ref{lemmaxresiduo}(vii).
\par
(b) If $ {x \over 2}$ is even, \par $  {y \over 2}+1  \leq \left
\lfloor{{2^{m-1} -4x' }\over {m}} \right \rfloor \iff m {y }+2m
\leq    2^{m} - 4 x \iff
  2^{m} - 4 x -my-2m \geq 0.
$ 

Since
 $y \leq {{2^{m} -4x}\over {m+1}} $ and $x \leq {{m-1} \over 2} $
 we get
$$ 2^{m} - 4 x -my-2m \geq
2^{m} - 4 x -m{{2^{m} -4x}\over {m+1}} -2m = {1\over {m+1}} (2^{m} -
4 x -2m(m+1))
$$
$$ \geq
{1\over {2m+2}} (2^{m+1} - 4 (m-1) -4m(m+1)) = {1\over {2m+2}}
(2^{m+1}  -4m^2-8m+4)),
$$
and $2^{m+1}  -4m^2-8m+4 \geq 0$  for $m\geq 7  $.
\par

 If $ {x \over 2}$ is odd, \par
$  {y \over 2}+1  \leq \left \lfloor{{2^{m-1} -4x' }\over {m}}
\right \rfloor \iff m {y }+2m  \leq    2^{m} - 4 x -8 \iff
  2^{m} - 4 x -my-2m -8\geq 0.
$ 

Since
 $y \leq {{2^{m} -4x}\over {m+1}} $ and $x \leq {{m-1} \over 2} $
 we get
$$ 2^{m} - 4 x -my-2m-8 \geq
2^{m} - 4 x -m{{2^{m} -4x}\over {m+1}} -2m -8
$$
$$=
{1\over {m+1}} (2^{m} - 4 x -2m(m+1)-8(m+1))
 \geq
{1\over {2m+2}} (2^{m+1}  -4m^2-24m-12)),
$$
and $2^{m+1}  -4m^2-24m-12 \geq 0$  for $m\geq 8  $. For $m=7$, we
have $x=2$ and $y \leq 14$ hence
$$2^{m} - 4 x -my-2m-8 \geq 128-8-98 -14-8 =0,
$$
so (b) is proved.
\par
Now we can compute $\dim (I_{Y})_{m-1} $. We get
$$\dim (I_{Y})_{m-1} = 2^{m-1}  - 4 \cdot {x\over 2}   - m \cdot {y\over 2}  ,$$
and from here
$$\dim (I_{T})_{m-1} =\dim (I_{W})_{m-1} =
2^{m-1} - 4 \cdot {x\over 2}   - m\cdot {y\over 2} 
-   {y\over 2} ,$$ hence
$$\dim (I_{Z})_m \leq   \dim (I_{W})_{m-1} + \dim (I_{T})_{m-1}=
2^m - 4x - (m+1)y,$$ and the lemma is proved.
\end{proof}

\section{The main theorem} \label{mainresult}

\bigskip
Now we come to the proof of the main theorem of this paper. 

\begin{thm}\label{mainTHM}
Let  $n, s  \in \Bbb N$, $n \geq 3$.  Let
$$
V_n = \PP^1\times \cdots \times \PP^1 \ \ \ \ (\hbox{$n$-times}).
$$

The dimension of $\sigma_s(V_n) \subset \PP^N$ \ ($N = 2^n -1$), is always the expected dimension, i.e.
$$
\dim \sigma_s(V_n) = \min \{ N, s(n+1)-1 \}
$$
for all $n$, $s$ as above EXCEPT for $n = 4, s=3$.

Moreover, the dimension of $\sigma_3(V_4) = 13$ (rather than 14, as expected).
\end{thm}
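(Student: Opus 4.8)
The plan is to combine Corollary~\ref{cortoproj} with the ``half'' of the result already recorded in Theorem~\ref{prodottiP1vecchi}, and to reduce the remaining (odd $s$) cases to the technical Lemmata~\ref{lemmaxresiduo} and \ref{lemmaxtraccia} via the differential Horace specialization described in the introduction. By Corollary~\ref{cortoproj} it suffices to compute $\dim (I_W)_n$ for $W = 2P_1+\cdots+2P_s+(n-1)Q_1+\cdots+(n-1)Q_n \subset \PP^n$ with the $P_i$ generic and the $Q_j$ the coordinate points; we must show this equals $\max\{2^n-(n+1)s, 0\}$ for all $(n,s)$ except $(n,s)=(4,3)$, where it is $1$ larger. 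Theorem~\ref{prodottiP1vecchi} already gives both inequalities whenever $s$ can be sandwiched between an \emph{even} integer $x$ and $e$ (resp.\ $e^*$); so the genuinely new content is the case where the relevant integer $\lfloor 2^n/(n+1)\rfloor$ or $\lceil 2^n/(n+1)\rceil$ is \emph{odd}, together with the small exceptional cases $n=3,4$ (and checking $n=4,s=3$ is genuinely defective).

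First I would dispose of the small cases $n=3,4$ by hand (or by citing \cite{CGG4}, \cite{AOP06}): for $n=3$, $2^3/(3+1)=2$ is an even integer so Theorem~\ref{prodottiP1vecchi} applies directly; for $n=4$, $2^4/5$ lies between $3$ and $4$, the value $s=3$ is the odd/defective one (this is exactly Lemma~\ref{lemmaxresiduo}(v.1)--type computation, confirmed by CoCoA), and $s=4$ is even so again covered. Then, for $n\geq 5$, by the reduction in items $i)$--$ii)$ of the introduction it is enough to treat the one or two critical values of $s$ near $2^n/(n+1)$; when that value is even we are done by Theorem~\ref{prodottiP1vecchi}, so assume it is odd. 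The strategy here is: choose a hyperplane $\Pi$ through $Q_2,\dots,Q_n$ not containing $Q_1$, specialize roughly half of the generic double points $2P_i$ onto $\Pi$, and apply Castelnuovo (Theorem~\ref{castelnuovo}) in the form of Lemma~\ref{lemzero}. The residual scheme contributes $(n-1)Q_1+(n-2)Q_2+\cdots$, a cone with vertex $Q_1$, so its Hilbert function is that of a scheme $W$ in $\Pi\cong\PP^{n-1}$; the trace scheme, after removing the fixed linear space $\langle Q_2,\dots,Q_n\rangle$ dictated by Lemma~\ref{lemfixcomp}, becomes a scheme $T$ in $\PP^{n-1}$. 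Crucially, when a double point is split by $\Pi$ the two ``halves'' must be handled by the $J^{(1)}$/$J^{(2)}$ bookkeeping of Lemmata~\ref{duepuntinixpiano}, \ref{lemmaxresiduo}, \ref{lemmaxtraccia}: a double point specialized onto $\Pi$ leaves a $2$-fat point on $\Pi$ plus a simple point off it, and this is exactly the mechanism that converts an odd count into an even one plus a remainder. The induction on $m=n$ closes because both $W$ and $T$ live in $\PP^{n-1}$ and their parameters $(x',y')$ satisfy the range conditions (a),(b) verified inside Lemma~\ref{lemmaxresiduo}(vii) and Lemma~\ref{lemmaxtraccia}(vi).

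More precisely, I would phrase the inductive statement so that $W$ becomes (the $\PP^{n-1}$-analogue of) a scheme of the form $(n-2)Q_2+\cdots+(n-2)Q_n + \big(\text{some }J^{(1)}\text{'s}\big)+\big(\text{some }J^{(2)}\text{'s on planes through pairs }Q_{2i},Q_{2i+1}\big)+\big(\text{double points}\big)$, i.e.\ precisely the schemes governed by the Residue Lemma~\ref{lemmaxresiduo} and the Trace Lemma~\ref{lemmaxtraccia}. Since those two lemmata are proved by exactly this same Horace descent (their parts (vi)/(vii)), the bulk of Theorem~\ref{mainTHM} is a matter of: (1) translating $\dim\sigma_s(V_n)$ into $\dim(I_W)_n$ via Corollary~\ref{cortoproj}; (2) writing $s$ in terms of the $x,y$ of the lemmata (roughly, $x$ counts planes through pairs of the $Q_j$ absorbing $2$ double points each, $y$ counts the leftover double points), checking $0\le x\le\lfloor(m-1)/2\rfloor$ and $y\le\lfloor(2^m-\cdots)/(m+1)\rfloor$; (3) invoking Lemma~\ref{lemmaxresiduo}(vii) or Lemma~\ref{lemmaxtraccia}(vi) — whichever parity of $s$ we are in — to get the exact dimension; and (4) for $n=4,s=3$, invoking Lemma~\ref{lemmaxresiduo}(v.1) and observing the defect is exactly $1$, giving $\dim\sigma_3(V_4)=15-2=13$.

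The main obstacle I expect is step (2): controlling the arithmetic of the range conditions uniformly in $n$ while keeping the parities of $x$ and $y$ compatible with what the lemmata require (both must be even in the sharp parts). The exceptional point $(4,3)$ sits precisely where this bookkeeping fails — there is no way to distribute $s=3$ double points into an even number of ``$J^{(2)}$ on a plane through two $Q_j$'' packets plus an even number of leftover double points without overshooting the tiny ambient $\PP^4$ — which is both why the theorem has an exception there and why that single case must be pinned down by the explicit CoCoA-assisted computation of Lemma~\ref{lemmaxresiduo}(v.1)--(v.2). Once the range verifications are in hand, the geometry (Terracini, polarity, Castelnuovo, the fixed-component Lemma~\ref{lemfixcomp}) is entirely routine, so I would front-load the effort into organizing the case analysis on $n \bmod$ small numbers and on the parity of $\lfloor 2^n/(n+1)\rfloor$.
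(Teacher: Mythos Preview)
Your overall plan matches the paper's: translate via Corollary~\ref{cortoproj}, dispatch the even values of $e,e^*$ by Theorem~\ref{prodottiP1vecchi}, and for the remaining odd $s$ specialize, apply Lemma~\ref{lemzero}, and feed the two pieces into Lemmata~\ref{lemmaxresiduo} and~\ref{lemmaxtraccia}. The gap is in the specialization itself.

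You describe only specializing roughly half of the $P_i$ onto the hyperplane $\Pi$. That alone produces $W$ and $T$ of the form ``$(m{-}1)Q$'s plus double points plus simple points'', i.e.\ the $x=0$ instance of both technical lemmata --- which is nothing but Theorem~\ref{prodottiP1vecchi} again. Since $s$ is odd, the double--point counts on the residue side and on the trace side cannot both be even, so you land back on precisely the parity obstruction you were trying to break. The paper's extra move is to write $n=4q+r$ and, \emph{before} cutting with $\Pi$, specialize $P_{2i}$ into the $3$--space $\Lambda_i=\langle Q_1,Q_{2i},Q_{2i+1},P_{2i-1}\rangle$ for $1\le i\le q$; one then checks (via Lemma~\ref{lemfixcomp}) that each $\Lambda_i$ is a fixed component of $(I_{\tilde X})_n$, so it may be added to the scheme without changing the degree--$n$ piece. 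Now applying Lemma~\ref{lemzero}: the trace $\Lambda_i\cap\Pi$ is the plane $H_i$ through $Q_{2i},Q_{2i+1}$, which is exactly the datum the Trace Lemma needs with $x=q$; and on the residue side, projecting from $Q_1$ sends the pair $2P_{2i-1}+2P_{2i}\subset\Lambda_i$ to two double points supported on $H_i$, i.e.\ a $J^{(2)}_{H_i}$, which is exactly what the Residue Lemma needs with $x=q$. The Appendix lemmata then verify the range conditions for $(m,x,y)=(n{-}1,\,q,\,t{+}1{-}2q)$ and $(n{-}1,\,q,\,t)$. Without the $\Lambda_i$ step your $W$ and $T$ never acquire the $J^{(2)}$/$H_i$ structure, the parameter $x$ stays at $0$, and the odd case is not reached.

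Two smaller corrections. Lemma~\ref{lemmaxresiduo}(v.1) lives at $m=4$ \emph{inside} the recursion (hence it feeds $n=5$, not $n=4$), and its value $\dim(I_X)_4=3$ is the expected one there, not a defect; the genuinely defective case $(n,s)=(4,3)$ is handled by citing \cite[Example~2.2]{CGG4}. And when $P\in\Pi$, the split of $2P$ under Castelnuovo is trace $=2P\subset\Pi$ and residual $=P$ (already on $\Pi$), not ``a simple point off $\Pi$''.
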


Using the results of Section \ref{prelims} we observe that  proving Theorem \ref{mainTHM} is
equivalent to proving:
\bigskip

\begin{thm} \label{mainTHM2}
  Let  $n, s  \in \Bbb N$, $n \geq 3$, and let $Q_1,... ,Q_n, P_1,...,P_s $ be generic points in $\PP ^n$. 
  Consider the following schemes
 $$X = (n-1)Q_1+\cdots + (n-1)Q_n + 2P_1+\cdots +2P_s \subset \PP ^n.$$
 When $(n,s)\neq (4,3)$ set
   \[ e =  {\left \lfloor { 2^n \over {n+1}} \right \rfloor } \hbox{ and }
  \ \ \ \ \ \  e^* = \left \lceil { 2^n \over {n+1}}  \right  \rceil = e+1.
   \]
Then:
\par {\rm (i) } if    $s \leq e $, we have 
$$\dim (I_X)_n = 2^n -(n+1)s ,$$

\par
{\rm (ii) } if   $s \geq e^* $, we have $$\dim (I_X)_n =0.$$ 

{\rm (iii) } If $(n,s)=(4,3)$, we have $\dim (I_X)_4 = 2$.
 \end{thm}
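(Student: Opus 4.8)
The plan is to derive Theorem~\ref{mainTHM2} from Theorem~\ref{prodottiP1vecchi} together with the Residue Lemma~\ref{lemmaxresiduo} and the Trace Lemma~\ref{lemmaxtraccia}, by reducing everything to a single ``critical'' value of $s$ for each $n$. Theorem~\ref{prodottiP1vecchi}(i) already yields part (i) whenever the interval $[s,e]$ contains an \emph{even} integer, and Theorem~\ref{prodottiP1vecchi}(ii) yields part (ii) whenever $[e^*,s]$ does. Since two consecutive integers have opposite parity, the only case this misses is when $s$ equals the odd one of $\{e,e^*\}$; and if $2^n/(n+1)\in\Bbb Z$ then $n=2^t-1$ and $e=e^*=2^{2^t-1-t}$ is even for $n\ge3$, so nothing is missed. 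Hence, apart from the genuinely exceptional pair $(4,3)$ of part (iii), there is for each $n\ge5$ exactly one critical pair $(n,s_0)$ left: if $e$ is odd then $s_0=e$ and one must show $\dim(I_X)_n=2^n-(n+1)e$; if $e$ is even then $s_0=e^*=e+1$ and one must show $\dim(I_X)_n=0$. In either case the opposite inequality is free (from $\dim(I_{(n-1)Q_1+\cdots+(n-1)Q_n})_n=2^n$, respectively from $\dim\ge0$), so only an upper bound on $\dim(I_X)_n$ is needed.

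For the critical pair I would cut with a hyperplane $\Pi$ through $Q_2,\dots,Q_n$ but missing $Q_1$, and apply Lemma~\ref{lemzero} with $Y=2P_1+\cdots+2P_{s_0}$. The decisive ``differential Horace'' move is to specialize about half of the double points onto $\Pi$, arranged so that they fall in pairs on planes $H_i=\langle Q_{2i},Q_{2i+1},\dots\rangle$; this is exactly why Lemmata~\ref{lemmaxresiduo} and \ref{lemmaxtraccia} require $x\le\lfloor (m-1)/2\rfloor$, since one needs enough coordinate points to pin these planes. For such a degenerate scheme each line $Q_1P_j$ joining $Q_1$ to a specialized support is a fixed component in degree $n$ (a form of degree $n$ cannot vanish to order $n-1$ at $Q_1$ and to order $2$ at a further point of the line), so after stripping those lines the projection $W$ of $Res_\Pi X$ from $Q_1$ and the residual trace $T=Res_{\Pi'}(Tr_\Pi X)$ become precisely schemes in $\Pi\cong\PP^{n-1}$ of the mixed shape handled in the general parts of those two lemmata (namely $(n-2)$-fold points at $Q_2,\dots,Q_n$ together with schemes of type $J^{(1)}$, $J^{(2)}$, full planes and ordinary double points). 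The bounds in Lemma~\ref{lemmaxresiduo}(vii) and Lemma~\ref{lemmaxtraccia}(vi) are calibrated so that $\dim(I_W)_{n-1}+\dim(I_T)_{n-1}$ is exactly the target $2^n-(n+1)s_0$ (or $0$), and Lemma~\ref{lemzero} then gives the desired inequality. The smallest cases $n=5,6$ lie below the range of this step and are verified directly (or with CoCoA, cf.~\cite{cocoa}).

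The real obstacle, as the authors emphasize, is arithmetic rather than conceptual: one must check, in both parity cases and for each of the four parities of $(x/2,\,y/2)$ that occur when the specialized configuration is split in the induction underlying the two lemmata, that the parameters $(x',y')$ of the resulting scheme in one lower dimension still satisfy $x'\le\lfloor (m-2)/2\rfloor$ and $y'\le\lfloor (2^{m-1}-2(m-1)x')/m\rfloor$ --- that is, that there is always ``enough room'' to descend. Fitting the critical $s_0$ into a sum $2mx+(m+1)y$ with $x,y$ even and in range, uniformly in $n$, is what the exponential-versus-polynomial-in-$m$ estimates inside the proofs of the lemmata are for, and this bookkeeping is where the bulk of the work lies.

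Finally, part (iii): for $X=3Q_1+\cdots+3Q_4+2P_1+2P_2+2P_3\subset\PP^4$ the expected value $2^4-5\cdot3=1$ is short by one. For the lower bound $\dim(I_X)_4\ge2$, transport back to $V_4\subset\PP^{15}$ via Corollary~\ref{cortoproj}: the three $4\times4$ ``flattening'' determinants attached to the three ways of splitting the four tensor factors into two pairs are multidegree-$(1,\dots,1)$ forms vanishing to order $3$ at every point of the Segre variety, hence they lie in $(I_Z)_{(1,\dots,1)}$ for $Z$ three generic $2$-fat points, and two of them are visibly linearly independent. For the matching upper bound, run Castelnuovo with the hyperplane $\Pi=\langle Q_1,Q_2,Q_3,Q_4\rangle\cong\PP^3$: then $Tr_\Pi X=3Q_1+\cdots+3Q_4$ has $(I_{Tr_\Pi X,\Pi})_4=0$ by Lemma~\ref{lemfixcomp}(i), so it suffices to check the single fact $\dim(I_{Res_\Pi X,\PP^4})_3=2$ for $Res_\Pi X=2Q_1+\cdots+2Q_4+2P_1+2P_2+2P_3$ (the seven-double-point scheme of $\PP^4$ in degree $3$ with four points on a hyperplane), a small explicit verification closely tied to the classical defectivity of the seventh secant variety of the cubic Veronese of $\PP^4$, and in any case recorded in \cite{CGG4}, \cite{AOP06} and \cite{Draisma}.
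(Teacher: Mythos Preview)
Your overall plan coincides with the paper's: reduce to the single odd critical value $s_0\in\{e,e^*\}$, cut with a hyperplane $\Pi\ni Q_2,\dots,Q_n$ via Lemma~\ref{lemzero}, and feed the resulting $W$ and $T$ into the Residue and Trace Lemmata. The reduction in your first paragraph is correct.

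The gap is in the specialization. You put ``about half of the double points onto $\Pi$, arranged so that they fall in pairs on planes $H_i=\langle Q_{2i},Q_{2i+1},\dots\rangle$'' and then strip only the fixed \emph{lines} $Q_1P_j$. That does not manufacture the \emph{full planes} $H_i$ that Lemma~\ref{lemmaxtraccia} needs in $T$, and it puts the $J^{(2)}$-pairs on the wrong side (into $T$ rather than $W$). The paper's move is more delicate: for $1\le i\le q$ (with $n=4q+r$) one specializes $P_{2i}$ into the $3$-space
\[
\Lambda_i=\langle Q_1,\,Q_{2i},\,Q_{2i+1},\,P_{2i-1}\rangle\cong\PP^3,
\]
which contains $Q_1$ and hence is \emph{not} in $\Pi$. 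Restricting a form of $(I_{\tilde X})_n$ to $\Lambda_i$ and using Lemma~\ref{lemfixcomp} one sees that each $\Lambda_i$ is itself a fixed component in degree $n$; so one passes to $Z=\tilde X+\Lambda_1+\cdots+\Lambda_q+L_{2q+1}+\cdots+L_s$. Now $Tr_\Pi Z$ contains the planes $H_i=\Lambda_i\cap\Pi$, giving $T$ exactly the shape of Lemma~\ref{lemmaxtraccia}; and the projection from $Q_1$ of $Res_\Pi Z$ sends the two double points $2P_{2i-1}+2P_{2i}\subset\Lambda_i$ to a $J^{(2)}_{H_i}$ in $\Pi$, giving $W$ the shape of Lemma~\ref{lemmaxresiduo}. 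A further $(s-1)/2$ of the remaining $P_j$ are specialized onto $\Pi$ to supply the ordinary double/simple points. Without the $\Lambda_i$ step your $W$ and $T$ do not match the hypotheses of the two lemmata, and the inequality from Lemma~\ref{lemzero} does not close.

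Two smaller remarks. The cases $n=5,6$ are not handled ad hoc in the paper; they are absorbed by the low-parameter clauses (v.1), (vi) of Lemma~\ref{lemmaxresiduo} and (iv), (v) of Lemma~\ref{lemmaxtraccia}, with the Appendix (Lemmata~\ref{xresiduoipotesi1}--\ref{lemdiseqnegative}) bookkeeping which clause applies for each $n$. For part~(iii), your flattening argument for the lower bound $\dim(I_X)_4\ge 2$ is a nice alternative to the paper's citation of \cite[Example~2.2]{CGG4}; your Castelnuovo reduction for the upper bound is correct through $(I_{Tr_\Pi X,\Pi})_4=0$, but the residual assertion $\dim(I_{2Q_1+\cdots+2Q_4+2P_1+2P_2+2P_3})_3=2$ still needs an argument (semicontinuity from the generic $7$-double-point case only gives $\ge 1$).
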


\medskip

\begin{proof}
When $e$, respectively $e^*$, is even see Theorem \ref{prodottiP1vecchi}, and the
same happens  when ${ 2^n \over {n+1}}$ is an
integer (necessarily even); for $(n,s)=(4,3)$ see \cite[Example 2.2]{CGG4}.
So we have only  to deal with the case when $e$ or $e^*$  are
odd, and it suffices to prove the statements (i) and (ii)
for $s=e$ and  $s=e^*$,  respectively.
\par

 Let 
 $$e=2t+1,\ \ e^*=2t^*+1,\ \ n=4q+r, \ \ 0\leq r < 4 .$$
 We 
consider  a specialization $\tilde X$ of $X$, which is defined as follows:
\begin{itemize}
\item
for all $1\leq i\leq q$, let $\Lambda_i =
<Q_1,Q_{2i},Q_{2i+1},P_{2i-1}> \cong \PP ^3$; we specialize $P_{2i}$
to a generic point of $\Lambda_i$;

\item
let $\Pi$ be a generic hyperplane through $Q_2,...,Q_n$ and
specialize the ${{s-1}\over 2}$  points $P_{2q+1},...,P_{2q+{{s-1}\over 2}}$ on $\Pi$.
Notice that 
$
{{s-1}\over 2} = \left \{
\begin{matrix}
 t & {\rm for}& s =e \\
 t^* & {\rm for}& s =e^* \\
 \end{matrix}
  \right.
 .$
\end{itemize}
Such specializations are possible since  $2q+1 \leq n$,
$ 2q+t \leq e$ and $2q+t^* \leq e^* $.

Now, by Lemma \ref{lemfixcomp}, it easily follows that the linear spaces $\Lambda_i$ are
fixed components for the hypersurfaces of $(I_{\tilde X})_n$ as are the lines $L_j$, where $L_j$ is the line through $Q_1$ and $P_j$, for $1\leq j \leq s$ . 
 Hence we obviously have that $(I_{\tilde X})_n = (I_{Z})_n$, where
$$ Z = (n-1)Q_1+\cdots + (n-1)Q_n + 2P_1+\cdots +2P_s + \Lambda _1
+ ...+ \Lambda_q + L_{2q+1} + ...+ L_{s},$$ (notice that the lines
$L_1$,...,$L_{2q}$ are already contained in the $\Lambda_i$'s).

\medskip

In case $s=e$, we surely have $\dim (I_X)_n \geq 2^n -(n+1)e$, and,
by semicontinuity, $\dim(I_{\tilde X})_n \geq \dim (I_X)_n $.  Since
we know that $\dim (I_{\tilde X})_n= \dim (I_{Z})_n$, we only have to prove
 that $\dim (I_{Z})_n \leq 2^n -(n+1)e$. 
\par
In case  $s=e^*$,
since $\dim (I_{\tilde X})_n \geq \dim (I_X)_n $, we only have
to prove that $\dim (I_{\tilde X})_n =\dim (I_{Z})_n=0$.

  By using Lemma \ref{lemzero} on $(I_Z)_n$ we have that
 $$\dim (I_{Z})_n \leq  \dim (I_{W,\Pi})_{n-1} +\dim (I_{T,\Pi})_{n-1}, $$
 where   
 $$W=  (n-2)Q_2 +...+ (n-2)Q_n + J_{H_1}^{(2)} + ... + J_{H_q}^{(2)} 
+ P'_{2q+1}+... + P'_{2q+{{s-1}\over2}} $$
$$+2 P_{2q+{{s-1}\over2}+1} + ... + 2P_{s} \subset \Pi \cong \PP ^{n-1},$$
$$T= (n-2)Q_2 +...+ (n-2)Q_n+ H_1 + ... + H_q +
2P_{2q+1}+... + 2P_{2q+{{s-1}\over2}} $$
$$+ P'_{2q+{{s-1}\over2}+1} + ... + P'_{s}\subset \Pi \cong \PP ^{n-1},$$
and where 
 $H_i = \Lambda _i \cap \Pi \cong \PP^2$, $i=1,...,q$, and $P'_j = L_j\cap
\Pi$, $j=2q+1,...,s$. 

Since each $P'_j$ is a generic simple   point in $\Pi$, in order to compute 
$\dim (I_{W,\Pi})_{n-1} $  and $\dim (I_{T,\Pi})_{n-1}$ we first compute the dimensions of the schemes
$$W'= W- ( P'_{2q+1}+... + P'_{2q+{{s-1}\over2}}  )$$
$$= (n-2)Q_2 +...+ (n-2)Q_n + J_{H_1}^{(2)} + ... + J_{H_q}^{(2)}  + 2 P_{2q+{{s-1}\over2}+1} + ... + 2P_{s},$$
and $$T'= T-( P'_{2q+{{s-1}\over2}+1} + ... + P'_{s})$$
$$= (n-2)Q_2 +...+ (n-2)Q_n+ H_1 + ... + H_q + 2P_{2q+1}+... + 2P_{2q+{{s-1}\over2}} .$$

To compute  $\dim (I_{W'})_{n-1}  $ we apply Lemma \ref{lemmaxresiduo}, with
$$m = n-1,  \ \ \ x= q, \ \ \ y= {{s+1}\over 2}   -2q  = \left \{
\begin{matrix}    &  t+1-2q & {\rm for} & s=e  & \\
  & t^*+1-2q & {\rm for}  & s=e^* & \\
  \end{matrix} 
 \right. .$$

Similarly, in order to compute $\dim (I_{T'})_{n-1}  $ we use Lemma \ref{lemmaxtraccia}, with
$$m = n -1,\ \ \  x= q ,\ \ \ y= {{s-1}\over 2}  = \left \{
\begin{matrix}    &  t & {\rm for} & s=e  & \\
  & t^* & {\rm for}  & s=e^* & \\
  \end{matrix} 
 \right.  .$$

In the Appendix (see Section \ref{appendice}) we will check that $m,x, y$ above verify the hypotheses of  Lemmas  \ref{lemmaxresiduo} and  \ref{lemmaxtraccia}.

\par
Assume $s = e$ . 
In this case 
by Lemmas \ref{xresiduoipotesi1} and Lemma \ref{xtracciaipotesi1}
 we get
$$\dim (I_{W'})_{n-1}  =2^{n-1} -2(n-1) x - n y= 2^{n-1} -2(n-1) q - n (t+1-2q),
$$
$$\dim (I_{T'})_{n-1}  = 2^{n-1} -4x - n y= 2^{n-1} -4q - n t,
$$
Since
 $W$ is formed by  $W'$ plus ${ {s-1   }\over 2  }=t$ simple generic points, 
 and  $T$ is formed by  $T'$ plus ${{s+1}\over 2}   -2q=t+1-2q$ simple generic points
 we have
$$\dim (I_{W})_{n-1}  ={ \rm max } \{0 ; \dim (I_{W'})_{n-1} -t \},
$$
$$\dim (I_{T})_{n-1}  ={ \rm max } \{0 ; \dim (I_{T'})_{n-1} -(t+1-2q) \},
$$
and since  by Lemma \ref{lemdiseqpositive}
$$
 \dim (I_{W'})_{n-1} =2^{n-1} -2(n-1) q - n (t+1-2q)  \geq t  ,
$$
$$
 \dim (I_{T'})_{n-1} =2^{n-1} -4q - n t  \geq t+1-2q  ,
$$
we get
$$\dim (I_{W})_{n-1}  =2^{n-1} -2(n-1) q - n (t+1-2q)-t ,
$$
$$\dim (I_{T})_{n-1}  = 2^{n-1} -4q - n t- (t+1-2q) .
$$
Thus
$$\dim (I_{Z})_n \leq  \dim (I_{W})_{n-1} +\dim (I_{T})_{n-1}=
2^n   -(n+1)(2t+1) = 2^n   -(n+1)e  ,
$$
and for the case $s = e$  we are done.
\par
For $s = e^*$ (using Lemmas \ref{xresiduoipotesi2} and Lemma \ref{xtracciaipotesi2})
we get
$$\dim (I_{W'})_{n-1}  =2^{n-1} -2(n-1) x - n y= 2^{n-1} -2(n-1) q - n (t^*+1-2q),
$$
$$\dim (I_{T'})_{n-1}  = 2^{n-1} -4x - n y= 2^{n-1} -4q - n t^* .
$$

As in the previous case (since
 $W$ is formed by  $W'$ plus ${ {s-1   }\over 2  }=t^*$ simple generic points, 
 and  $T$ is formed by  $T'$ plus ${{s+1}\over 2}   -2q=t^*+1-2q$ simple generic points) 
 we have
$$\dim (I_{W})_{n-1}  ={ \rm max } \{0 ; \dim (I_{W'})_{n-1} -t ^*\},
$$
$$\dim (I_{T})_{n-1}  ={ \rm max } \{0 ; \dim (I_{T'})_{n-1} -(t^*+1-2q) \},
$$
and since  by Lemma \ref{lemdiseqnegative}
$$
 \dim (I_{W'})_{n-1} =2^{n-1} -2(n-1) q - n (t^*+1-2q)  \leq t^*  ,
$$
$$
 \dim (I_{T'})_{n-1} =2^{n-1} -4q - n t^*  \leq t^*+1-2q  ,
$$
we get
$$\dim (I_{W})_{n-1}  
=\dim (I_{T})_{n-1}  = 0 .
$$
Thus
$$\dim (I_{Z})_n \leq  \dim (I_{W})_{n-1} +\dim (I_{T})_{n-1}=0  ,
$$
and we are done also in case $s = e^*$.
\end{proof}

\section{appendix}\label{appendice}

\begin{lem}\label{xresiduoipotesi1} Let the notation be as in the proof of Theorem \ref{mainTHM2}.  Let $n \geq 5$,  $s$ odd,
$$n=4q+r, \ \ \ s =   {\left \lfloor { 2^n \over {n+1}} \right \rfloor } =2t+1 . 
$$
Then
$$\dim (I_{W'})_{n-1}  = 2^{n-1} -2(n-1) q - n (t+1-2q) 
 .$$
 \end{lem}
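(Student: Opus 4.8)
The plan is to apply the Residue Lemma (Lemma~\ref{lemmaxresiduo}) to $W'$ with the parameters already singled out in the proof of Theorem~\ref{mainTHM2}, namely
$$
m=n-1,\qquad x=q,\qquad y=\frac{s+1}{2}-2q=t+1-2q,
$$
and to read off $\dim(I_{W'})_{n-1}$ from the part of that lemma appropriate to the size of $n$. The scheme $W'$ is by construction of exactly the shape occurring there: it is $(n-2)Q_2+\cdots+(n-2)Q_n$ (that is, $m$ points of multiplicity $m-1$ in $\Pi\cong\PP^{n-1}$), plus $q$ schemes of type $J^{(2)}$ supported on the generic planes $H_i=\Lambda_i\cap\Pi$, each through two of the $Q_j$, plus $t+1-2q$ further double points, which are generic in $\Pi$ since the underlying $P_j$ were not specialised. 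So the first task is to check the standing hypotheses of Lemma~\ref{lemmaxresiduo}: $m\ge 3$ holds since $n\ge 5$; $y=t+1-2q\ge 0$ is the inequality $2q+t\le e$ that already made the specialisation in the proof of Theorem~\ref{mainTHM2} legitimate; and $0\le q\le\lfloor\frac{m-1}{2}\rfloor=\lfloor\frac{n-2}{2}\rfloor$ follows from $q=(n-r)/4$ by an elementary estimate.

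Next I would split on $n$. Since $e=\lfloor 2^n/(n+1)\rfloor$ is assumed odd, the values $n=7,8$ (where $e$ is even, and for $n=7$ even integral) do not arise, so after $n=5,6$ the next case is $n=9$. For $n=5$ one has $q=1$, $t=2$, so $(m,x,y)=(4,1,1)$ and part (v.1) of Lemma~\ref{lemmaxresiduo} gives $\dim(I_{W'})_4=3=2^4-2\cdot4-5$. For $n=6$ one has $q=1$, $t=4$, so $(m,x,y)=(5,1,3)$ and part (vi) gives $\dim(I_{W'})_5=4=2^5-2\cdot5-6\cdot3$. In both cases this is the asserted value, so it remains to treat $n\ge 9$, where $m=n-1\ge 8$ and part (vii) of the Residue Lemma is available.

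For $n\ge 9$ the argument is to pass to even parameters and use parts (vii) and (iii). Let $x'$ be the least even integer $\ge q$ and $y'$ the least even integer $\ge y=t+1-2q$, so $x'\le q+1$ and $y'\le y+1$, and let $X'$ be the scheme built like $W'$ but with $x'$ schemes of type $J^{(2)}$ and $y'$ generic double points. One verifies the hypotheses of part (vii) for $X'$, namely $0\le x'\le\lfloor\frac{m-1}{2}\rfloor$ and $0\le y'\le\lfloor\frac{2^m-2mx'}{m+1}\rfloor$; then part (vii) yields $\dim(I_{X'})_{n-1}=2^{n-1}-2(n-1)x'-ny'$, and part (iii), applied with $q=x\le x'$ and $y\le y'$, upgrades this to
$$
\dim(I_{W'})_{n-1}=2^{n-1}-2(n-1)q-n(t+1-2q),
$$
which is the claim.

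The step I expect to be the main obstacle is the range condition $y'\le\lfloor\frac{2^m-2mx'}{m+1}\rfloor$ after rounding up to even values. Since $s=e$ is the largest admissible multiplicity, the original $y=t+1-2q$ lies essentially at the boundary of the range allowed by part (vii), so one must check that enlarging $q$ and $t+1-2q$ by at most one apiece does not push $X'$ out of that range; concretely this becomes an inequality of the form $(t+1-2q)+1\le\frac{2^{n-1}-2(n-1)(q+1)}{n}$ in which $e=\lfloor 2^n/(n+1)\rfloor$ must be replaced by the bound $2t+1\le 2^n/(n+1)$ and combined with $q=(n-r)/4$. For $n\ge 9$ the exponential gap between $2^{n-1}/n$ and the $O(n)$ rounding correction makes this routine (this is what Lemma~\ref{lemdiseqpositive} records), but it is precisely the place where the arithmetic enters, and it is also why the two small cases $n=5,6$ need the ad hoc parts (v.1) and (vi) of the Residue Lemma rather than part (vii).
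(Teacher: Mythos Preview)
Your proposal is correct and follows essentially the same route as the paper: the small cases $n=5,6$ via parts (v.1) and (vi) of the Residue Lemma, the exclusion of $n=7,8$ by parity, and for larger $n$ the passage to even parameters $x',y'$ so that parts (vii) and (iii) apply. The only discrepancy is that the paper treats $n=9$ separately (where $x=2$, $y=22$ are already even, so (vii) applies directly) and begins the rounding-up argument at $n\ge 10$; also, your parenthetical reference to Lemma~\ref{lemdiseqpositive} is misplaced---that lemma records the inequalities ensuring the simple points do not overshoot the residual dimension in the proof of Theorem~\ref{mainTHM2}, not the range condition for part (vii) that you are discussing.
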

 
\begin{proof}
We have only to check that we may apply Lemma \ref{lemmaxresiduo}, with
$$m = n-1,  \ \ \ x= q, \ \ \ y=  t+1-2q .$$
The first case we have to consider is $n=5$: we have $s=5$, $m=4$, $x=y=1$, hence the conclusion follows by (v.1) of Lemma \ref{lemmaxresiduo}.
\par
For $n=6$  we have $s=9$, $m=5$, $x=1$ , $y=3$ and we are done by Lemma \ref{lemmaxresiduo} (vi).
\par
For  $n=7$ and $n=8$ $s$ is even, and we don't have anything to prove.
\par
For $n=9$  we have $s=51$, $m=8$, $x=2 \leq  \left \lfloor{{m-1} \over 2} \right \rfloor =3$ , $y=22$,
$x$ and $y$ even, $y \leq \left  \lfloor{  {2^{m} -2mx } \over {m+1}} \right \rfloor =24$, and we are done by  (vii) of Lemma \ref{lemmaxresiduo}.
\par
Assume $n\geq  10$. In order to apply  Lemma \ref{lemmaxresiduo}(iii) and (vii) it suffices to show that 
  there exist  $x'$ and $y'$  even such that
  
\begin{equation} \label{xresiduoxminoredi1}
0 \leq x \leq x' \leq  \left \lfloor{{m-1} \over 2} \right \rfloor   ; 
 \end{equation}
 
\begin{equation}  \label{xresiduoyminoredi1}
0 \leq y \leq y ' \leq  \left  \lfloor{  {2^{m} -2m x' } \over {m+1}} \right \rfloor   .
\end{equation}
Obviously $x\geq 0$ and it is easy to check that  also $y \geq 0 $.\par
Let
$$
{x'} = \left \{
\begin{matrix}
x & {\rm for}  & x  & {\rm even} \\
x+1 & {\rm for}  & x  & {\rm odd} \\
 \end{matrix}
  \right.
  \hskip1cm
  {y'} = \left \{
\begin{matrix}
y & {\rm for}  & y  & {\rm even} \\
y+1 & {\rm for}  & y  & {\rm odd} \\
 \end{matrix}
  \right.
 .$$

For the first inequality, we will be done if
$ x+1  \leq  \left \lfloor{{m-1} \over 2} \right \rfloor $, with $n\geq 10$. Since 
$$x+1  \leq \left  \lfloor{{m-1} \over 2}\right  \rfloor  \iff 
 2q+2 \leq n-2 \iff {{n-r} \over 2} +2 \leq n-2 \iff n \geq 8-r
$$
 and  $n\geq  10$,  then (\ref{xresiduoxminoredi1}) holds.
 \par
For the second inequality,  notice that 
$ y +1  \leq  \left  \lfloor{  {2^{m} -2m (x+1) } \over {m+1}} \right \rfloor  $ implies  (\ref{xresiduoyminoredi1}). Since
 $$ y +1  \leq  \left  \lfloor{  {2^{m} -2m (x+1) } \over {m+1}} \right \rfloor  \iff
 (t+2-2q)n \leq 2^{n-1} -2(n-1)(q+1) 
 $$
 $$
  \iff 2^{n-1}+2q+2-4n-tn \geq 0 \iff
  2^{n-1}+2-3n -{r\over 2}- {n\over 2} {\left \lfloor { 2^n \over {n+1}} \right \rfloor}   \geq 0 .
 $$
and since  for $n \geq 10$ we have
 $$2^{n-1}+2-3n -{r\over 2}- {n\over 2} {\left \lfloor { 2^n \over {n+1}} \right \rfloor} \geq
 2^{n-1}+2-3n -{3\over 2}- {n\over 2} {\left ( { 2^n \over {n+1}} \right )}$$
 $$={1\over{2n+2}} (2^n -(6n-1)(n+1)) \geq 0,
 $$
it follows that  (\ref{xresiduoyminoredi1}) holds, and we are done.
\par

\end{proof}

\begin{lem}\label{xresiduoipotesi2} Let the notation be as in the proof  of Theorem \ref{mainTHM2}.  Let $n \geq 5$,  $s$ odd,
$$n=4q+r, \ \ \ s =   {\left \lceil { 2^n \over {n+1}} \right \rceil} =2t^*+1 . 
$$
Then
$$\dim (I_{W'})_{n-1}  = 2^{n-1} -2(n-1) q - n (t^*+1-2q) 
 .$$
 \end{lem}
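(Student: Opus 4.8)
The plan is to follow the proof of Lemma~\ref{xresiduoipotesi1} almost verbatim, replacing $t$ by $t^*$ throughout. Recall that in the proof of Theorem~\ref{mainTHM2} (now in the case $s=e^*$) the scheme $W'$ is a subscheme of $\Pi\cong\PP^{n-1}$ consisting of $(n-2)Q_2+\cdots+(n-2)Q_n+J^{(2)}_{H_1}+\cdots+J^{(2)}_{H_q}$ together with exactly $(s+1)/2-2q=t^*+1-2q$ generic double points; thus $W'$ is precisely a scheme of the shape treated by the Residue Lemma (Lemma~\ref{lemmaxresiduo}) with
$$m=n-1,\qquad x=q,\qquad y=t^*+1-2q .$$
Consequently the entire task is to check that this triple meets the hypotheses of Lemma~\ref{lemmaxresiduo}; once that is done, parts (iii) and (vii) of that lemma return the value $2^{n-1}-2(n-1)q-n(t^*+1-2q)$. (Here $x=q\ge 0$ trivially, and $y\ge 0$ because the specialization in Theorem~\ref{mainTHM2} forces $2q\le e^*-t^*=t^*+1$.)

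I would first dispose of the small values of $n$. For $n=5,6,7,9$ one has $e^*=\lceil 2^n/(n+1)\rceil=6,10,16,52$ respectively, all even, so the hypothesis ``$s=e^*$ odd'' is vacuous and nothing is to be proved. The only honest small case is $n=8$, where $e^*=29$, so $t^*=14$, $q=2$, $r=0$, giving $m=7$, $x=2$, $y=11$. Taking $x'=2$ and $y'=12$ (both even), one verifies $x'\le\lfloor(m-1)/2\rfloor=3$ and $y'\le\lfloor(2^m-2mx')/(m+1)\rfloor=\lfloor 100/8\rfloor=12$, so Lemma~\ref{lemmaxresiduo}(vii) applies with parameters $(x',y')$, and then Lemma~\ref{lemmaxresiduo}(iii) yields $\dim(I_{W'})_7=2^7-2\cdot 7\cdot 2-8\cdot 11=12$, which is the claimed value.

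For $n\ge 10$ I would argue exactly as in Lemma~\ref{xresiduoipotesi1}: let $x'$ be whichever of $x,x+1$ is even and $y'$ whichever of $y,y+1$ is even, so that by Lemma~\ref{lemmaxresiduo}(iii),(vii) it is enough to check
$$x+1\le\left\lfloor\frac{m-1}{2}\right\rfloor\qquad\text{and}\qquad y+1\le\left\lfloor\frac{2^m-2m(x+1)}{m+1}\right\rfloor .$$
The first inequality is $2q+2\le n-2$, i.e. $n\ge 8-r$, already disposed of in Lemma~\ref{xresiduoipotesi1}. For the second, the same rearrangement as there shows it is equivalent to $2^{n-1}-t^*n-4n+2q+2\ge 0$; substituting $2q=(n-r)/2$ and using $t^*=(e^*-1)/2< 2^{n-1}/(n+1)$ (which holds since $e^*=\lceil 2^n/(n+1)\rceil$), this reduces to $2^n-7n^2-3n-rn-r+4\ge 0$, and since $r\le 3$ this follows from $2^n-7n^2-6n+1\ge 0$, which is true for all $n\ge 10$. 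That completes the verification, and Lemma~\ref{lemmaxresiduo} then gives the stated dimension.

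There is no real obstacle in this argument; the only point where one must be marginally more careful than in Lemma~\ref{xresiduoipotesi1} is that $e^*$ exceeds $e=\lfloor 2^n/(n+1)\rfloor$ by at most one, so the final polynomial inequality is slightly tighter than its counterpart there — but, as just noted, it is still comfortably satisfied once $n\ge 10$, and the single value $n=8$ is the only genuine computation.
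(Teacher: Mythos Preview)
Your proof is correct and follows essentially the same approach as the paper: reduce to Lemma~\ref{lemmaxresiduo}(iii),(vii) by rounding $x,y$ up to even $x',y'$, handle $n=8$ directly, note that $n=5,6,7,9$ are vacuous since $e^*$ is even there, and for $n\ge 10$ verify the two required inequalities. Your final polynomial bound $2^n-7n^2-6n+1\ge 0$ is in fact the correct simplification (the paper's displayed expression $2^n-6n(n+1)$ appears to contain a minor arithmetic slip, though both are positive for $n\ge 10$).
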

 
\begin{proof} As in the previous lemma,
we have only to check that we may apply Lemma \ref{lemmaxresiduo}, with
$$m = n-1,  \ \ \ x= q, \ \ \ y=  t^*+1-2q .$$
The first case we have to consider is $n=8$: we have $s=29$, $m=7$, $x=2$, $y=11$.
Let $y'=12$. Since $ x  \leq  \left \lfloor{{m-1} \over 2} \right \rfloor  =3  $
and  $ y \leq y ' \leq  \left  \lfloor{  {2^{m} -2m x} \over {m+1}} \right \rfloor =12$,
we may apply Lemma \ref{lemmaxresiduo}(iii) and (vii) and we are done.

\par
For  $n=9 $,  $s$ is even.
\par
Assume $n\geq  10$. In order to apply Lemma \ref{lemmaxresiduo}(iii) and (vii) it suffices to show that 
  there exist  $x'$ and $y'$  even such that
  
\begin{equation} \label{xresiduoxminoredi2}
0 \leq x \leq x' \leq  \left \lfloor{{m-1} \over 2} \right \rfloor   ; 
 \end{equation}
 
\begin{equation}  \label{xresiduoyminoredi2}
0 \leq y \leq y ' \leq  \left  \lfloor{  {2^{m} -2m x' } \over {m+1}} \right \rfloor   .
\end{equation}

As in the previous lemma let
$$
{x'} = \left \{
\begin{matrix}
x & {\rm for}  & x  & {\rm even} \\
x+1 & {\rm for}  & x  & {\rm odd} \\
 \end{matrix}
  \right.
  \hskip1cm
  {y'} = \left \{
\begin{matrix}
y & {\rm for}  & y  & {\rm even} \\
y+1 & {\rm for}  & y  & {\rm odd} \\
 \end{matrix}
  \right.
 .$$
In the previous lemma we already checked the inequality  (\ref{xresiduoxminoredi2}), so let us deal with the inequality  (\ref{xresiduoyminoredi2}). It is easy to check that  $y \geq 0 $, hence we will be done if
$ y +1  \leq  \left  \lfloor{  {2^{m} -2m (x+1) } \over {m+1}} \right \rfloor  $.
We have
 $$ y +1  \leq  \left  \lfloor{  {2^{m} -2m (x+1) } \over {m+1}} \right \rfloor  \iff
 (t^*+2-2q)n \leq 2^{n-1} -2(n-1)(q+1) 
 $$
 $$
  \iff 2^{n-1}+2q+2-4n-t^*n \geq 0 \iff
  2^{n-1}+2-3n -{r\over 2}- {n\over 2} {\left \lceil { 2^n \over {n+1}} \right \rceil}   \geq 0 .
 $$
 Since for $n \geq 10$ we have
 $$2^{n-1}+2-3n -{r\over 2}- {n\over 2} {\left \lceil { 2^n \over {n+1}} \right \rceil} \geq
 2^{n-1}+2-3n -{3\over 2}- {n\over 2} {\left ( { 2^n \over {n+1}} +1\right )}$$
 $$={1\over{2n+2}} (2^n -6n(n+1)) \geq 0,
 $$
then it follows that  (\ref{xresiduoyminoredi2}) holds, and we are done.
\end{proof}

\begin{lem} \label{xtracciaipotesi1} Let the notation be as in the proof of Theorem \ref{mainTHM2}.  Let $n \geq 5$,  $s$ odd,
$$n=4q+r, \ \ \ s =   {\left \lfloor { 2^n \over {n+1}} \right \rfloor } =2t+1 . 
$$
Then
$$\dim (I_{T'})_{n-1}  = 2^{n-1} -4 q - n t
 .$$
\end{lem}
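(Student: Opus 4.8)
The plan is to reduce everything to a hypothesis-check for the Trace Lemma (Lemma~\ref{lemmaxtraccia}), applied to $T'$ with $m = n-1$, $x = q$, $y = t$; once the hypotheses are verified, that lemma gives $\dim (I_{T'})_{n-1} = 2^m - 4x - (m+1)y = 2^{n-1} - 4q - nt$, which is the assertion. Indeed $T'$ is, inside $\Pi \cong \PP^{n-1}$, the union of the $(n-2)$-fat points $Q_2,\dots,Q_n$, of the $q$ planes $H_i = \Lambda_i \cap \Pi$ (each of which contains $Q_{2i}$ and $Q_{2i+1}$), and of the $t = (s-1)/2$ double points $2P_{2q+1},\dots,2P_{2q+t}$; up to relabelling the $Q_j$ this is exactly the scheme called $X$ in Lemma~\ref{lemmaxtraccia} with those parameters. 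So the proof will be a purely arithmetic verification, entirely parallel to the proof of Lemma~\ref{xresiduoipotesi1} (but easier, since the Trace Lemma charges only $4$ conditions per plane $H_i$, against $2m$ per $J^{(2)}$ in the Residue Lemma).

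First I would treat the small values of $n$ by hand. For $n = 5$ one has $s = 5$, hence $(m,x,y) = (4,1,2)$, which is exactly Lemma~\ref{lemmaxtraccia}(iv) and gives $\dim (I_{T'})_4 = 2$. For $n = 6$ one has $s = 9$, hence $(m,x,y) = (5,1,4)$; Lemma~\ref{lemmaxtraccia}(v) gives the value $0$ for the parameters $(5,2,4)$, and then Lemma~\ref{lemmaxtraccia}(ii) descends to $x = 1$, yielding $\dim (I_{T'})_5 = 4$. For $n = 7$ and $n = 8$ the integer $s = \lfloor 2^n/(n+1)\rfloor$ is even, so the hypothesis fails and there is nothing to prove. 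For $n = 9$ one has $s = 51$, hence $(m,x,y) = (8,2,25)$; here I would round $y$ up to the even value $y' = 26$, note $y' \leq \lfloor (2^8 - 8)/9\rfloor = 27$ and $x' = 2 \leq \lfloor 7/2\rfloor = 3$, apply Lemma~\ref{lemmaxtraccia}(vi) to the parameters $(8,2,26)$, and then Lemma~\ref{lemmaxtraccia}(ii) to come back down to $y = 25$.

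For $n \geq 10$ I would, exactly as in Lemma~\ref{xresiduoipotesi1}, set $x' = q$ if $q$ is even and $x' = q+1$ if $q$ is odd, and likewise $y' = t$ or $t+1$, and then invoke Lemma~\ref{lemmaxtraccia}(vi) for the parameters $(m,x',y')$ followed by Lemma~\ref{lemmaxtraccia}(ii) to pass from $(x',y')$ to $(x,y)$. This requires checking $0 \leq x \leq x' \leq \lfloor (m-1)/2\rfloor$ and $0 \leq y \leq y' \leq \lfloor (2^m - 4x')/(m+1)\rfloor$. The first reduces, via $x' \leq q+1$ and $n = 4q+r$, to $n \geq 8 - r$, which is already checked in the proof of Lemma~\ref{xresiduoipotesi1} and holds trivially for $n \geq 10$. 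For the second it suffices to show $n(t+1) + 4(q+1) \leq 2^{n-1}$; writing $nt = \frac{n}{2}(s-1)$ and $4q = n - r$, the left side equals $\frac{n}{2}s + \frac{3n}{2} - r + 4$, and using $s \leq 2^n/(n+1)$ together with $\frac{n}{2}\cdot\frac{2^n}{n+1} = 2^{n-1} - \frac{2^{n-1}}{n+1}$, the inequality follows once $\frac{2^{n-1}}{n+1} \geq \frac{3n}{2} + 4$, i.e.\ once $2^n \geq (3n+8)(n+1)$, which is clear for $n \geq 10$.

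The only step that is not purely mechanical is the second inequality above: one must confirm that rounding $y = t$ up to the next even integer does not carry it past the threshold $\lfloor (2^{n-1} - 4x')/n\rfloor$ demanded by the Trace Lemma. The reason there is ample room is that $t$ is of order $\frac{1}{2}\cdot\frac{2^n}{n+1} \approx \frac{2^{n-1}}{n}$, already comparable to the threshold, and the slack comes from the factor $\frac{n}{n+1}$ versus $1$, producing a gap of order $\frac{2^{n-1}}{n(n+1)}$, which easily absorbs the $O(n)$ corrections (the various $+1$'s and the term $4(q+1)$). Since the corresponding bound for the Residue Lemma used in Lemma~\ref{xresiduoipotesi1} is the stricter one, no genuinely new difficulty arises here.
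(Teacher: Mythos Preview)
Your proposal is correct and follows essentially the same route as the paper: reduce to Lemma~\ref{lemmaxtraccia} with $m=n-1$, $x=q$, $y=t$, handle small $n$ directly via parts (iv), (v), (ii), and for larger $n$ round $x,y$ up to even $x',y'$ and verify the two numerical bounds, the second of which reduces to $2^n \geq (3n+8)(n+1)$. The only cosmetic difference is that the paper folds $n=9$ into the general case (the same inequality already holds there), whereas you treat it separately; both are fine.
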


\begin{proof}
We want to check that we may apply Lemma \ref{lemmaxtraccia}, with
$$m = n-1,  \ \ \ x= q, \ \ \ y=  t .$$

For $n=5$  we have $s=5$, $m=4$, $x=1$, $y=2$, hence we are in case (iv) of Lemma \ref{lemmaxtraccia}.
\par
For $n=6$  we have $s=9$, $m=5$, $x=1$ , $y=4$ and we are done by Lemma \ref{lemmaxtraccia} (ii) and (v).
\par
For  $n=7$ and $n=8$ $s$ is even.
\par
Assume $n\geq  9$. In order to apply  (ii) and (vi)  of Lemma \ref{lemmaxtraccia} it suffices to show that    there exist  $x'$ and $y'$  even such that
  
  \begin{equation} \label{xtracciaxminoredi1}
0 \leq x \leq x' \leq  \left \lfloor{{m-1} \over 2} \right \rfloor   ; 
 \end{equation}

 \begin{equation} \label{xtracciayminoredi1}
  0 \leq y   \leq y' \leq  \left  \lfloor{  {2^{m} - 4x' } \over {m+1}} \right \rfloor   . 
\end{equation}

As usual let
$$
{x'} = \left \{
\begin{matrix}
x & {\rm for}  & x  & {\rm even} \\
x+1 & {\rm for}  & x  & {\rm odd} \\
 \end{matrix}
  \right.
  \hskip1cm
  {y'} = \left \{
\begin{matrix}
y & {\rm for}  & y  & {\rm even} \\
y+1 & {\rm for}  & y  & {\rm odd} \\
 \end{matrix}
  \right.
 .$$
For the proof of (\ref{xtracciaxminoredi1})   see the proof given for (\ref{xresiduoxminoredi1}) 
in Lemma \ref{xresiduoipotesi1}.
\par
We easily get $y \geq 0 $.
Since
$ y +1  \leq  \left  \lfloor{  {2^{m} -4 (x+1) } \over {m+1}} \right \rfloor  $ implies  (\ref{xtracciayminoredi1}), we prove that this inequality holds.
We have
 $$ y +1  \leq  \left  \lfloor{  {2^{m} -4 (x+1) } \over {m+1}} \right \rfloor  \iff
 (t+1)n \leq 2^{n-1} -4(q+1) 
 $$
 $$
  \iff 2^{n-1} -2n -4+r   -{n\over 2}      {\left \lfloor { 2^n \over {n+1}} \right \rfloor}   + {n\over 2}     \geq 0 . $$
 Since for $n \geq 9$ we have
 $$
 2^{n-1} -2n -4+r   -{n\over 2}      {\left \lfloor { 2^n \over {n+1}} \right \rfloor}  + {n\over 2}     \geq
    {1 \over {2n+2}}(2^{n}-( 3n +8)(n+1))     \geq 0,
 $$
 then it follows that  (\ref{xtracciayminoredi1}) holds, and we are done.
\end{proof}

\begin{lem} \label{xtracciaipotesi2} Let the notation be as in the proof  of Theorem \ref{mainTHM2}.  Let $n \geq 5$,  $s$ odd,
$$n=4q+r, \ \ \ s =   {\left \lceil { 2^n \over {n+1}} \right \rceil} =2t^*+1 . 
$$
Then
$$\dim (I_{T'})_{n-1}  = 2^{n-1} -4 q - n t^*
 .$$
\end{lem}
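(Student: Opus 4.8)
The plan is to carry out the same reduction as in the proof of Lemma \ref{xtracciaipotesi1}, since the scheme $T'$ here has exactly the same shape: $q$ generic planes $H_i$ through the pairs $Q_{2i}, Q_{2i+1}$ together with $(s-1)/2$ generic double points, the only difference being that now $s = e^* = 2t^*+1$, so that the number of double points equals $y = t^*$. Thus I would invoke the Trace Lemma (Lemma \ref{lemmaxtraccia}) with $m = n-1$, $x = q$, $y = t^*$; once its hypotheses are checked, parts (ii) and (vi) of that lemma give exactly $\dim (I_{T'})_{n-1} = 2^{n-1} - 4q - nt^*$.

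First I would dispose of the small cases. Since $\lceil 2^n/(n+1)\rceil$ is even for $n = 5,6,7,9$ (and $n = 4$ is excluded), the first case in which $s$ is odd is $n = 8$, where $m = 7$, $x = 2$, $y = 14$ are both even and satisfy $x \leq \lfloor (m-1)/2\rfloor = 3$ and $y \leq \lfloor (2^m-4x)/(m+1)\rfloor = 15$, so Lemma \ref{lemmaxtraccia}(vi) applies directly. For $n \geq 10$ I would, exactly as in Lemmas \ref{xresiduoipotesi1} and \ref{xtracciaipotesi1}, round $x$ and $y$ up to the nearest even integers $x'$, $y'$, and then verify two inequalities: (a) $x' \leq \lfloor (m-1)/2\rfloor$, which is literally the inequality $2q+2 \leq n-2$ already established inside Lemma \ref{xresiduoipotesi1}; and (b) $y' \leq \lfloor (2^m-4x')/(m+1)\rfloor$, for which it suffices to show $(t^*+1)n \leq 2^{n-1} - 4(q+1)$.

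The one genuinely new computation is inequality (b), and it is routine. The key input is that for the relevant $n$ the number $2^n/(n+1)$ is not an integer (otherwise $s$ would be even), so $s = \lceil 2^n/(n+1)\rceil < 2^n/(n+1)+1$ and hence $t^*+1 = (s+1)/2 < 2^{n-1}/(n+1)+1$; combining this with $4q \leq n$, the desired inequality reduces to $2^{n-1}/(n+1) - 2n - 4 \geq 0$, which is clear for $n \geq 10$ since the exponential term dominates. I do not expect any real obstacle here: the whole argument is a transcription of Lemma \ref{xtracciaipotesi1} with $t$ replaced by $t^*$ and the estimate $\lceil a\rceil < a+1$ used in place of $\lfloor a\rfloor \leq a$. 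The only thing requiring care --- as throughout this appendix --- is keeping track of parities when passing from $(x,y)$ to the even pair $(x',y')$ and confirming that the rounded-up bound $y'$ still lies below $\lfloor (2^m-4x')/(m+1)\rfloor$.
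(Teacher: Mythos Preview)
Your proposal is correct and follows essentially the same route as the paper: dispose of $n=5,6,7,9$ by parity, handle $n=8$ directly via Lemma \ref{lemmaxtraccia}(vi), and for $n\geq 10$ round $(x,y)=(q,t^*)$ up to even $(x',y')$ and verify the two bounds, with (a) borrowed from Lemma \ref{xresiduoipotesi1} and (b) reduced to $(t^*+1)n \leq 2^{n-1}-4(q+1)$. Your estimate for (b), using $t^*+1 < 2^{n-1}/(n+1)+1$ and $4q\leq n$ to reduce to $2^{n-1}/(n+1)\geq 2n+4$, is exactly the paper's computation (which writes the same inequality as $2^n - 4(n+2)(n+1)\geq 0$).
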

\begin{proof}
As in the previous lemma, we want to check that we may apply Lemma \ref{lemmaxtraccia}  with
$$m = n-1,  \ \ \ x= q, \ \ \ y=  t ^*.$$

For $n=5, 6, 7,9$,  $s$ is not odd.\par
For   $n=8$ we have $s=29$, $m=7$, $x=2\leq  \left \lfloor{{m-1} \over 2} \right \rfloor =3$, 
$y=14    \leq  \left  \lfloor{  {2^{m} -4x} \over {m+1}} \right \rfloor   =15,$ and we are done by
(vi)  of Lemma \ref{lemmaxtraccia}
\par
Assume $n\geq  10$. In order to apply    Lemma \ref{lemmaxtraccia} (ii) and (vi) we show that   the even integers
$$
{x'} = \left \{
\begin{matrix}
x & {\rm for}  & x  & {\rm even} \\
x+1 & {\rm for}  & x  & {\rm odd} \\
 \end{matrix}
  \right.
  \hskip1cm
  {y'} = \left \{
\begin{matrix}
y & {\rm for}  & y  & {\rm even} \\
y+1 & {\rm for}  & y  & {\rm odd} \\
 \end{matrix}
  \right.
 $$
  are such that
 \begin{equation} \label{xtracciaxminoredi2}
0 \leq x \leq x' \leq  \left \lfloor{{m-1} \over 2} \right \rfloor   ; 
 \end{equation}
 
\begin{equation}  \label{xtracciayminoredi2}
0 \leq y \leq y ' \leq  \left  \lfloor{  {2^{m} -4x'} \over {m+1}} \right \rfloor   .
\end{equation}

For the proof of (\ref{xtracciaxminoredi2})   see the one given for (\ref{xresiduoxminoredi1}) 
in Lemma \ref{xresiduoipotesi1}.
\par
Obviously $y \geq 0 $.
Since
$ y +1  \leq  \left  \lfloor{  {2^{m} -4 (x+1) } \over {m+1}} \right \rfloor  $ implies  (\ref{xtracciayminoredi2}), we prove that this last inequality holds.
We have
 $$ y +1  \leq  \left  \lfloor{  {2^{m} -4 (x+1) } \over {m+1}} \right \rfloor  \iff
 (t^*+1)n \leq 2^{n-1} -4(q+1) 
 $$
 $$
  \iff 2^{n-1} -2n -4+r   -{n\over 2}     {\left \lceil { 2^n \over {n+1}} \right \rceil}   + {n\over 2}     \geq 0 . $$
 Since for $n \geq 10$ we have
 $$
 2^{n-1} -2n -4+r   -{n\over 2}      {\left \lceil { 2^n \over {n+1}} \right \rceil}  + {n\over 2}     \geq
    {1 \over {2n+2}}(2^{n}-4( n +2)(n+1))     \geq 0,
 $$
 then it follows that  (\ref{xtracciayminoredi2}) holds, and we are done.
\end{proof}

\begin{lem} \label{lemdiseqpositive}
Let $n \geq 5$, $0 \leq r \leq 3$,  $s$ odd,
$$n=4q+r, \ \ \ s =   {\left \lfloor { 2^n \over {n+1}} \right \rfloor } =2t+1 . 
$$
Then \par
$
{\rm (i) } \ 
2^{n-1} -2(n-1) q - n (t+1-2q) - t  \geq 0 ;
$
\par
$
{\rm (ii) } \ 
2^{n-1} -4q - n t  -( t+1-2q) \geq 0   .
$
\end{lem}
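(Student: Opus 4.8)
The plan is to simplify each of the two quantities so that its dependence on $(n+1)t$ becomes visible, and then to bound $(n+1)t$ from above using the divisibility behaviour of $s = 2t+1$.

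First I would carry out the algebra. Writing $q = (n-r)/4$ (so that $2q = (n-r)/2$) and expanding, the quantity in (i) becomes
$$2^{n-1}-2(n-1)q-n(t+1-2q)-t \;=\; 2^{n-1}-n+2q-(n+1)t ,$$
while the quantity in (ii) becomes
$$2^{n-1}-4q-nt-(t+1-2q)\;=\;2^{n-1}-2q-(n+1)t-1 .$$
Thus both statements reduce to a suitable upper bound on $(n+1)t$.

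Next comes the arithmetic heart of the matter. Since $s = 2t+1$ is odd, $2^n/(n+1)$ is not an integer — an integral value would necessarily be even, as recorded in the proof of Theorem \ref{mainTHM2} — so $n+1$ does not divide $2^n$, and writing $2^n = (n+1)s + \rho$ with $1 \le \rho \le n$ gives $(n+1)s \le 2^n - 1$. Hence
$$(n+1)t=\tfrac12(n+1)(s-1)\le\tfrac12(2^n-1-(n+1))=2^{n-1}-\tfrac{n+2}{2} .$$
Substituting this into the simplified form of (ii) yields
$$2^{n-1}-2q-(n+1)t-1\ \ge\ -2q+\tfrac{n+2}{2}-1\ =\ \tfrac r2\ \ge\ 0 ,$$
which proves (ii). Substituting it into (i) yields
$$2^{n-1}-n+2q-(n+1)t\ \ge\ -n+2q+\tfrac{n+2}{2}\ =\ 1-\tfrac r2\ \ge\ -\tfrac12 ;$$
since the left-hand side is an integer, it is $\ge 0$, which proves (i).

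The step I expect to be the genuine obstacle is the very last one: for $r = 3$ the cruder estimate $(n+1)s \le 2^n$ would only give $\ge -1$ in (i), so one really needs both the strict inequality $(n+1)s \le 2^n - 1$ — which is exactly where the hypothesis ``$s$ odd'' (equivalently, $n+1$ not a power of $2$) enters — and the integrality of the expression in (i) to close the gap. Everything else is routine substitution, valid uniformly for all $n \ge 5$ under the stated hypotheses, so no small cases need be isolated.
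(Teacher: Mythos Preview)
Your argument is correct and follows essentially the same route as the paper: both reduce (i) and (ii) to the identical simplified forms $2^{n-1}-n+2q-(n+1)t$ and $2^{n-1}-2q-(n+1)t-1$, then use that $s$ odd forces $n+1\nmid 2^n$, i.e.\ $(n+1)s\le 2^n-1$ (the paper phrases this as $2^n=(n+1)h+k$ with $k\ge 1$). The only difference is in the $r=3$ endgame for (i): the paper observes $4\mid n+1$ forces $k\ge 2$ directly, whereas you invoke integrality of the expression to pass from $\ge -\tfrac12$ to $\ge 0$; both are valid and equally short.
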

\begin{proof} 
Notice that, since $ {\left \lfloor { 2^n \over {n+1}} \right \rfloor }$ is odd, $n+1$ does not divides $2^n$.
Let $h,k \in \Bbb N$ be such that
$$ 2^n = (n+1) h +  k,  \ \ \ \ \   1 \leq  k \leq n.
$$

(i) We have
$$
2^{n-1} -2(n-1) q - n (t+1-2q) - t  =
2^{n-1}+2 q - (n+1)t-n =
$$
$$
{1\over2} ((n+1) h +  k +  (n-r) -(n+1) (h-1) - 2n)=
{1\over2} ( k -r+1 ).
$$
Since $k \geq 1 $, then for $r\leq 2$ we are done.
If $r=3$,  then $4$ divides $n+1$, hence $k \neq1 $ and the conclusion follows.

(ii) We have
$$2^{n-1} -4q - n t  -( t+1-2q) =
2^{n-1}-2 q - (n+1)t-1 =
$$
$${1\over2} ((n+1) h +  k -  (n-r) -(n+1) (h-1) - 2)= k +r -1 .
$$ 
Since $k \geq 1 $, then (ii) holds.
\end{proof}

\begin{lem} \label{lemdiseqnegative}
Let $n \geq 5$, $0 \leq r \leq 3$,  $s$ odd,
$$n=4q+r, \ \ \ s =   {\left \lceil { 2^n \over {n+1}} \right \rceil} =2t^*+1 . 
$$
Then
\par
{\rm (i) }$
2^{n-1} -2(n-1) q - n (t^*+1-2q) - t^* \leq 0 .
$
\par
{\rm (ii) }$
2^{n-1} -4q - n t^*  -( t^*+1-2q )\leq 0.
$
\end{lem}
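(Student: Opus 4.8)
The plan is to imitate verbatim the proof of Lemma~\ref{lemdiseqpositive}, with the floor replaced by the ceiling. First I would perform the Euclidean division $2^n = (n+1)h + k$ with $1 \le k \le n$; this is legitimate, since the standing hypothesis that $s$ is odd forces $n+1 \nmid 2^n$ (if $n+1$ divided $2^n$ then $2^n/(n+1)$ would be an even integer, as recalled in the proof of Theorem~\ref{mainTHM2}, contradicting $s = 2t^*+1$). Because $0 < k/(n+1) < 1$, this gives $s = \lceil 2^n/(n+1)\rceil = h+1$, so that $h+1 = 2t^*+1$; in particular $h$ is even and $t^* = h/2$. I would also record $q = (n-r)/4$ and $2^{n-1} = \bigl((n+1)h + k\bigr)/2$.

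Next comes the substitution and simplification. Expanding, the left-hand side of (i) equals $2^{n-1} + 2q - (n+1)t^* - n$, which with the above substitutions collapses to $(k - r - n)/2$; this is $\le 0$ since $k \le n$ and $r \ge 0$. Likewise the left-hand side of (ii) equals $2^{n-1} - 2q - (n+1)t^* - 1$, which collapses to $(k - n + r - 2)/2$; this is $\le 0$ exactly when $k \le n - r + 2$, and for $r \in \{0,1,2\}$ that is immediate from $k \le n$.

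The only point that requires a separate argument is the inequality $k \le n - r + 2$ in part (ii) when $r = 3$, and I expect to handle it exactly as the symmetric case in the proof of Lemma~\ref{lemdiseqpositive}(i): here $n+1 = 4(q+1)$ is divisible by $4$, and since $n \ge 5$ we have $4 \mid 2^n$, hence $4 \mid k$; the largest multiple of $4$ not exceeding $n = 4q+3$ is $4q = n-3$, so $k \le n-3 \le n-1 = n - r + 2$, as needed. I do not anticipate any genuine obstacle: once the division $2^n = (n+1)h+k$ is in hand everything is elementary arithmetic, and the lone slightly delicate step, the divisibility observation for $r=3$, already appears in its $\ge 0$ incarnation in the companion Lemma~\ref{lemdiseqpositive}.
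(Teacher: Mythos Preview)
Your proof is correct and follows essentially the same route as the paper: the same Euclidean division $2^n=(n+1)h+k$, the identification $t^*=h/2$, and the same algebraic collapse to $(k-r-n)/2$ and $(k-n+r-2)/2$. The only cosmetic difference is in the $r=3$ case of (ii), where the paper observes merely that $k$ is even (hence $k\le n-1$ since $n$ is odd), while you note the slightly stronger $4\mid k$; both arguments work.
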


\begin{proof} 
Let $h,k \in \Bbb N$ be as in the proof of Lemma \ref{lemdiseqpositive}:
$$ 2^n = (n+1) h +  k,  \ \ \ \ \   1 \leq k \leq n.
$$

(i)
$$
2^{n-1} -2(n-1) q - n (t^*+1-2q) - t ^* =
2^{n-1}+2 q - (n+1)t^*-n =
$$
$$
{1\over2} ((n+1) h +  k +  (n-r) -(n+1) h - 2n)=
{1\over2} ( k -r-n ) \leq {1\over2} (  -r )  \leq 0.
$$

(ii) 
$$
2^{n-1} -4q - n t^*  -( t^*+1-2q )=
2^{n-1} -2 q - (n+1)t^*-1 =
$$
$$
{1\over2} ((n+1) h +  k -  (n-r) -(n+1) h - 2)=
{1\over2} (   k - n+r - 2).
$$
Since $k \leq n$, for $r\leq 2$ we are done. 
If $r=3$,  then  $n=4q+3$ is odd, moreover  
$2^{n} =4(q+1)h + k $, hence $k  $ is even. It follows that 
$k \leq n-1$ and the conclusion follows.
\end{proof}

\bibliographystyle{alpha}
\bibliography{marvi.bib}

\end{document}